\def\DOIsuffix#1{}\def\Volume#1{}\def\Month#1{}\def\Year#1{}
\def\pagespan#1#2{}\def\Receiveddate#1{}
\def\Reviseddate#1{}\def\Accepteddate#1{}\def\Dateposted#1{}
\let\tmpsubjclass\subjclass\def\subjclass[#1]#2{\tmpsubjclass[2000]{#2}}
\newtheorem{theorem}{Theorem}[section]        
\newtheorem{lem}[theorem]{Lemma}
\theoremstyle{definition}\newtheorem{definition}[theorem]{Definition}
\newtheorem{example}[theorem]{Example}
\theoremstyle{remark}\newtheorem{remark}[theorem]{Remark}
\newtheorem*{acknowledgement}{Acknowledgment}
\let\tmpmaketitle\maketitle
\def\maketitle{\tmpmaketitle\thispagestyle{plain}}
\numberwithin{equation}{section}
\newcommand{\rom}[1]{\mbox{\leavevmode\skip@\lastskip\unskip\/%
           \ifdim\skip@=\z@\else\hskip\skip@\fi{\rm{#1}}}}
\newcommand{\Thm}[1]{Theorem~\ref{th:#1}}
\newcommand{\Lem}[1]{Lemma~\ref{lem:#1}}
\newcommand{\Eq}[1]{\rom{\eqref{eq:#1}}}
\renewcommand{\a}{\alpha}
\newcommand{\dl}{\delta}
\newcommand{\kp}{\kappa}
\newcommand{\sg}{\sigma}
\newcommand{\ph}{\varphi}
\newcommand{\la}{\langle}\newcommand{\ra}{\rangle}
\def\cB{\mathcal{B}}
\def\cE{\mathcal{E}}
\def\cF{\mathcal{F}}
\newcommand{\cFloc}{\dot{\mathcal{F}}_{\mathrm{loc}}}
\def\cG{\mathcal{G}}
\def\cM{\mathcal{M}}
\newcommand{\bfa}{{\boldsymbol a}}
\newcommand{\bfb}{{\boldsymbol b}}
\newcommand{\bfg}{{\boldsymbol g}}
\newcommand{\bfh}{{\boldsymbol h}}
\newcommand{\bfu}{{\boldsymbol u}}
\newcommand{\bfv}{{\boldsymbol v}}
\newcommand{\bfx}{{\boldsymbol x}}
\newcommand{\bfz}{{\boldsymbol 0}}
\newcommand{\bfone}{{\boldsymbol 1}}
\newcommand{\bfM}{{\boldsymbol M}}
\newcommand{\N}{\mathbb{N}}
\newcommand{\R}{\mathbb{R}}
\newcommand{\Z}{\mathbb{Z}}
\newcommand{\maruM}{{\stackrel{\circ}{\smash{\mathcal{M}}\rule{0pt}{1.3ex}}}}
\newcommand{\rank}{\mathop{\mathrm{rank}}\nolimits}
\newcommand{\supp}{\mathop{\mathrm{supp}}\nolimits}
\newcommand{\nuesssup}{\mathop{\nu\mathrm{\mbox{-}\,esssup}}}
\def\tr#1{\mathord{\mathopen{{\vphantom{#1}}^t}#1}}
\begin{document}
\DOIsuffix{mana.DOIsuffix}
\Volume{}
\Month{}
\Year{}
\pagespan{1}{}
\Receiveddate{}
\Reviseddate{}
\Accepteddate{}
\Dateposted{}
\keywords{Dirichlet form, measurable Riemannian structure, energy measure, index, diffusion process, martingale additive functional}
\subjclass[msc2000]{31C25, 46G05, 60J60, 60H05}


\title[Measurable Riemannian structures]{Measurable Riemannian structures associated with strong local Dirichlet forms}

\author[M. Hino]{Masanori Hino}%
\address{Graduate School of Informatics, Kyoto University, Kyoto 606-8501, Japan}
\email{hino@i.kyoto-u.ac.jp}
\thanks{Research partially supported by KAKENHI (21740094, 24540170).}
\begin{abstract}
We introduce Riemannian-like structures associated with strong local Dirichlet forms on general state spaces. Such structures justify the principle that the pointwise index of the Dirichlet form represents the effective dimension of the virtual tangent space at each point.
The concept of differentiations of functions is studied, and an application to stochastic analysis is presented.
\end{abstract}
\maketitle                   




\section{Introduction}
Thus far, numerous studies have been conducted to investigate local structures that are derived from diffusion processes and to obtain analytic information that reflects the local behavior of the processes. Although these studies are fundamental, general theories have been often proposed in rather limited frameworks.
In a typical situation where the state space has a Riemannian  structure and the diffusion process is given by a suitable stochastic differential equation, its generator, which is described as a second-order differential operator, is a solution to the first step of the problems.
However, there are many examples without such simple structures, e.g., Brownian motions on fractals and various kinds of singular diffusions on Euclidean spaces.
On the other hand, from different viewpoints, it has been observed that the investigation of the filtration associated with the diffusion process, in particular, a class of martingales with respect to the filtration, was useful for understanding the local structures. 
Some of the pioneering works include \cite{MW64,Sk66,KW67,IW73,DV74} and a series of papers published by Marc~Yor in the 1970s (also see the references therein).
Setting up the problem in this manner is valid in general situations;
for example, Kusuoka~\cite{Ku89} proved that the AF-martingale dimension, which represents a type of multiplicity of filtration, is always $1$ for  Brownian motion on an arbitrarily dimensional standard Sierpinski gasket.
The quantitative estimate concerning underlying spaces with anomalous structure of this type is a highly nontrivial problem.
Inspired by this work, the author introduced in \cite{Hi10} the analytic concept of the (pointwise) index of strong local regular Dirichlet forms $(\cE,\cF)$ on general state spaces, and proved that the index coincides with the AF-martingale dimension of the diffusion process associated with $(\cE,\cF)$.
This characterization was used in \cite{Hi11p} to deduce some estimates of AF-martingale dimensions for self-similar fractals, which generalized the results of \cite{Ku89,Hi08}.
Moreover, the concept of the derivative of functions in $\cF$ was studied in \cite{Hi10} for a class of self-similar fractals, which informally implied that the index represents the dimension of the proper ``tangent space'' of the underlying fractal set.

The main objective of this paper is to justify the above statement more explicitly in the general framework.
Given a strong local regular Dirichlet form $(\cE,\cF)$ on a general state space $X$ with finite index $p$, we prove that there exist $p$ functions $g_1,\dots,g_p$ in $\cF$ that play the role of a type of local coordinate system, and that every function $f$ in $\cF$ has a differentiation $\nabla\!_\bfg f$ with respect to $\bfg=\tr{(g_1,\dots,g_p)}$.
We also show that $\cE$ has an integral representation using $\nabla\!_\bfg$, analogous to the classical energy form.
From these results, we may say that a Riemannian-like structure associated with $(\cE,\cF)$ is equipped with $X$ and that the pointwise index is interpreted as the effective dimension of the ``tangent space'' at each point.
We denote such a structure by the {\em measurable Riemannian structure}, following the terminology in \cite{Ki08}.
As an application of these results to stochastic analysis, we improve upon the theorem in \cite{Hi10} on the stochastic integral representation of martingale additive functionals.

We should remark here that results similar to the claims stated above have been obtained in previous studies on the analysis on fractals: gradient operators on some fractals were introduced in \cite{Ku89,Ku93,Te00}, the concept of differentiation along a representative in $\cF$ was discussed in \cite{PT08,Hi10}, and the measurable Riemannian structure on the Sierpinski gasket was considered in \cite{Ki93,Ki08}.
Furthermore, in Chapter~3 of \cite{Eb}, a family of Hilbert spaces was introduced as the tangent bundle associated with a general strong local Dirichlet form.
The study in this paper differs from those stated above in that underlying spaces do not need particular structures, and the differentiation of functions is realized using the minimal number of functions; in other words, the effective ``Riemannian metric'' is nondegenerate almost everywhere. 
This refinement helps to clarify the intrinsic structure of the Dirichlet form.
We hope that such improvements will be useful for further investigation of the local structures of diffusion processes as well as the development of differential calculus on nonsmooth spaces, based on the theory of Dirichlet forms.

The remainder of this paper is organized as follows. In Section~2, we introduce some concepts of Dirichlet forms and provide a few examples. In Section~3, we prove two main theorems about (1)~the existence of a set of functions considered as a generalized local coordinate system and (2)~the differentiation formula on functions in the domain of the Dirichlet form. In Section~4, we discuss an application to stochastic analysis, using the results presented in the previous section.
\section{Index of strong local Dirichlet form}
First, we introduce some basic concepts of Dirichlet forms, following~\cite{FOT}.
Let $X$ be a locally compact, separable, and metrizable space. 
Let $m$ be a positive Radon measure on $X$ with full support.
For an $m$-measurable function $f$ on $X$, we denote the support of measure $|f|\cdot m$ by $\supp[f]$.
Let $(\cE,\cF)$ be a regular Dirichlet form on $L^2(X;m)$.
The set $\cF$ becomes a Hilbert space with the inner product $(f,g)_\cF:=\cE(f,g)+\int_X fg\,dm$ for $f,g\in\cF$.
We assume that $(\cE,\cF)$ is also strong local, that is, $\cE(f,g)=0$ for $f,g\in\cF$ if both $\supp[f]$ and $\supp[g]$ are compact and $g$ is constant on a neighborhood of $\supp[f]$.
We write $\cE(f)$ for $\cE(f,f)$.
Let $\cF_b$ denote the set of all bounded functions in $\cF$, and
$C_c(X)$, the space of all continuous functions on $X$ with compact support.
For each $f\in\cF$, we define the energy measure $\mu_{\la f\ra}$ on $X$ as follows~(\cite[Section~3.2]{FOT}).
If $f$ is bounded, $\mu_{\la f\ra}$ is determined by the identity
\[
  \int_X \ph\,d\mu_{\la f\ra}=2\cE(f\ph,f)-\cE(\ph,f^2)\quad
  \mbox{for all }\ph\in\cF\cap C_c(X).
\]
From the inequality
\begin{equation*}
  \left|\sqrt{\mu_{\la f\ra}(B)}-\sqrt{\mu_{\la g\ra}(B)}\right|^2\le \mu_{\la f-g\ra}(B)
  \le 2\cE(f-g)
\end{equation*}
for any Borel subset $B$ of $X$ and $f,g\in\cF_b$
(cf.~\cite[p.~123]{FOT}), we can define $\mu_{\la f\ra}$ for any $f\in\cF$ by the limiting procedure.
From the strong locality of $(\cE,\cF)$, the identity
\begin{equation}\label{eq:total}
\cE(f)=\frac12\mu_{\la f\ra}(X)
\end{equation}
holds for $f\in\cF$ (see \cite[Lemma~3.2.3]{FOT}).
For $f,g\in\cF$, the mutual energy measure $\mu_{\la f,g\ra}$, which is a signed Borel measure on $X$, is defined as
\begin{equation}\label{eq:mutual}
  \mu_{\la f,g\ra}=\frac12(\mu_{\la f+g\ra}-\mu_{\la f\ra}-\mu_{\la g\ra}).
\end{equation}
Then, $\mu_{\la f,f\ra}=\mu_{\la f\ra}$ and $\mu_{\la f,g\ra}$ is bilinear in $f$ and $g$.
Moreover, for $f,g\in\cF$ and any Borel subset $B$ of $X$,
\begin{equation}
\label{eq:schwarz}
\left|\mu_{\la f,g\ra}(B)\right|\le\sqrt{\mu_{\la f\ra}(B)}\sqrt{\mu_{\la g\ra}(B)}.
\end{equation}

Associated with $(\cE,\cF)$, there exists a diffusion process $\{X_t\}$ on the one-point compactification $X_\Delta$ of $X$ with a filtered probability space $(\Omega,\cF_\infty,P,\{P_x\}_{x\in X_\Delta},\{\cF_t\}_{t\in[0,\infty)})$.
An $m$-measurable function $f$ on $X$ is called locally in $\cF$ in the broad sense ($f\in \cFloc$ in notation) if there exist a sequence of nearly Borel finely open sets $\{G_n\}_{n=1}^\infty$ and a sequence $\{u_n\}_{n=1}^\infty$ in $\cF$ such that $G_n\subset G_{n+1}$ for every $n\in\N$, $\bigcup_{n=1}^\infty G_n=X$ q.e., and $f=u_n$ $m$-a.e.\ on $G_n$ for every $n\in\N$.
We can then define the energy measure $\mu_{\la f\ra}$ of $f\in\cFloc$ so that $\mu_{\la f\ra}|_{G_n}=\mu_{\la u_n\ra}|_{G_n}$ for every $n$.
For $f,g\in\cFloc$, a signed measure $\mu_{\la f,g\ra}$ on $X$ is defined as \Eq{mutual}, and inequality~\Eq{schwarz} holds as long as the total masses of $\mu_{\la f\ra}$ and $\mu_{\la g\ra}$ are both finite.
The constant function $\bfone$ on $X$ belongs to $\cFloc$ (cf.~\cite[Theorem~4.1]{Kuw98}), and its energy measure is a null measure.
In particular,
\begin{equation}\label{eq:constant}
\mu_{\la f+c\bfone\ra}=\mu_{\la f\ra}\quad
\text{for any $f\in\cF$ and $c\in\R$}.
\end{equation}

For two $\sg$-finite (or signed) Borel measures $\mu_1$ and $\mu_2$ on $X$, we write $\mu_1\ll\mu_2$ if $\mu_1$ is absolutely continuous with respect to $|\mu_2|$.
Following \cite{Hi10}\footnote{In \cite{Hi10}, the energy measure of $f$ is denoted by $\nu_f$. In this paper, we adopt the symbol $\mu_{\la f\ra}$, following \cite{FOT}.}, we introduce the concepts of {minimal energy-dominant measure} and {index} of $(\cE,\cF)$.
\begin{definition}\label{def:mdem}
A $\sg$-finite Borel measure $\nu$ on $X$ is called a {\em minimal energy-dominant measure} (m.\,e.\,d.\,m.) of $(\cE,\cF)$ if the following two conditions are satisfied.
\begin{enumerate}
\item[(a)] (Domination) For every $f\in \cF$, $\mu_{\la f\ra}\ll \nu$;
\item[(b)] (Minimality) If another $\sg$-finite Borel measure $\nu'$ on $X$ satisfies condition (a) with $\nu$ replaced by $\nu'$, then $\nu\ll\nu'$.
\end{enumerate}
\end{definition}
By definition, two m.\,e.\,d.\,m.'s are mutually absolutely continuous.
There always exists an m.\,e.\,d.\,m.\ (cf.\ \cite[Lemma~2.3]{Hi10}).
From \Eq{schwarz}, $\mu_{\la f,g\ra}\ll\nu$ for m.\,e.\,d.\,m.\ $\nu$ and $f,g\in\cF$.

Fix an m.\,e.\,d.\,m.\ $\nu$ of $(\cE,\cF)$.
Let $\Z_+$ denote the set of all nonnegative integers.
\begin{definition}\label{def:index}
The pointwise index and the index of $(\cE,\cF)$ are defined as follows.
\begin{enumerate}
\item The {\em pointwise index} $p(x)$ is a $\nu$-measurable function on $X$ taking values in $\Z_+\cup\{+\infty\}$ such that the following hold:
\begin{enumerate}
\item For any $N\in\N$ and any $f_1,\dots,f_N\in\cF$, 
\[
 \rank \left(\frac{d\mu_{\la f_i,f_j\ra}}{d\nu}(x)\right)_{i,j=1}^N\le p(x)\quad \mbox{for }\nu\mbox{-a.e.\,}x\in X;
\]
\item If another function $p'(x)$ satisfies (a) with  $p(x)$ replaced by $p'(x)$, then $p(x)\le p'(x)$ for $\nu$-a.e.\,$x\in X$.
\end{enumerate}
\item The {\em index} $p$ is defined as $p=\nuesssup_{x\in X}p(x)\in\Z_+\cup\{+\infty\}$.
In other words, $p$ is the smallest number satisfying the following:
for any $N\in\N$ and any $f_1,\dots,f_N\in\cF$, 
\[
 \rank \left(\frac{d\mu_{\la f_i,f_j\ra}}{d\nu}(x)\right)_{i,j=1}^N\le p\quad \mbox{for }\nu\mbox{-a.e.\,}x\in X.
\]
\end{enumerate}
\end{definition}
These definitions are independent of the choice of $\nu$.
The pointwise index $p(x)$ is uniquely determined up to $\nu$-equivalence.

The following are a few nontrivial examples.
\begin{example}[superposition]\label{ex:superposition}
Let $n$ be an integer greater than $1$, and take $\R^n$ as $X$.
Let $m$ be the Lebesgue measure on $\R^n$, and $(\cdot,\cdot)_{\R^n}$, the standard inner product of $\R^n$.
We denote the set of all $C^\infty$-functions on $\R^n$ with compact supports by $C_c^\infty(\R^n)$.
For $f,g\in C_c^\infty(\R^n)$, define
\[
\cE(f,g)=\frac12\int_{\R^n}(\nabla f(\bfx,y),\nabla g(\bfx,y))_{\R^n}\,d\bfx\,dy
+\frac12\int_{\R^{n-1}} \sum_{k=1}^{n-1}\frac{\partial f}{\partial x_k}(\bfx,0)\frac{\partial g}{\partial x_k}(\bfx,0)\,d\bfx,
\]
where $\bfx\in\R^{n-1}$, $y\in\R$, and $d\bfx$ and $dy$ represent the Lebesgue measures on $\R^{n-1}$ and $\R$, respectively.
Then, $(\cE,C_c^\infty(\R^n))$ is closable on $L^2(\R^n;m)$.
Its closure, denoted by $(\cE,\cF)$, is a strong local regular Dirichlet form on $L^2(\R^n;m)$.
The mutual energy measure $\mu_{\la f,g\ra}$ for $f,g\in\cF$ is described as
\[
  d\mu_{\la f,g\ra}=(\nabla f(\bfx,y),\nabla g(\bfx,y))_{\R^n}\,dm
  +\sum_{k=1}^{n-1}\frac{\partial \tilde f}{\partial x_k}(\bfx,0)\frac{\partial \tilde g}{\partial x_k}(\bfx,0)\,d\bfx\otimes\delta_0(dy),
\]
where $\dl_0$ is the Dirac measure at $0$, and $\tilde f$ and $\tilde g$ denote quasi-continuous modifications of $f$ and $g$, respectively.
Then, we can take $\nu=m+d\bfx\otimes\delta_0(dy)$ as an m.\,e.\,d.\,m.
We will show that the pointwise index $p(\bfx,y)$ $((\bfx,y)\in\R^n)$ and the index $p$ are given by
\begin{equation}\label{eq:pz}
p(\bfx,y)=\begin{cases}
n&\text{if $y\ne0$}\\
n-1&\text{if $y=0$}
\end{cases}
\quad\nu\text{-a.e.}
\end{equation}
and $p=n$.
For any finite number of functions $f_1,\dots,f_N$ in $\cF$, we have
\[
\frac{d\mu_{\la f_i,f_j\ra}}{d\nu}(\bfx,y)=
\begin{cases}
\displaystyle\left(\nabla f_i(\bfx,y),\nabla f_j(\bfx,y)\right)_{\R^n}&\text{if }y\ne0\\
\displaystyle\sum_{k=1}^{n-1}\frac{\partial \tilde f}{\partial x_k}(\bfx,0)\frac{\partial \tilde g}{\partial x_k}(\bfx,0)&\text{if }y=0
\end{cases}
\quad\nu\text{-a.e.}
\]
Accordingly,
\[
\left(\frac{d\mu_{\la f_i,f_j\ra}}{d\nu}(\bfx,y)\right)_{i,j=1}^N=
\begin{cases}
A(\bfx,y)\,\tr{\!A(\bfx,y)}&\text{if }y\ne0\\
B(\bfx)\,\tr{\!B(\bfx)}&\text{if }y=0,
\end{cases}
\]
where
$A(\bfx,y)$ is an $(N,n)$-matrix whose $(i,k)$-component is $(\partial{f_i}/\partial x_k)(\bfx,y)$, and $B(\bfx)$ is an $(N,n-1)$-matrix whose $(i,k)$-component is $(\partial{\tilde f_i}/\partial x_k)(\bfx,0)$.
Therefore, \Eq{pz} holds with ``$=$'' replaced by ``$\le$''. 
For $R>0$, take $f_1,\dots,f_n\in\cF$ such that $f_1(\bfx,y)=x_1,\dots, f_{n-1}(\bfx,y)=x_{n-1}, f_n(\bfx,y)=y$ on $\{|(\bfx,y)|_{\R^n}<R\}$, with $\bfx=(x_1,\dots,x_{n-1})\in\R^{n-1}$ and $y\in\R$.
Then, it is easy to see that
\[
\rank\left(\frac{d\mu_{\la f_i,f_j\ra}}{d\nu}(\bfx,y)\right)_{i,j=1}^n=
\begin{cases}
n&\text{if }y\ne0\\
n-1&\text{if }y=0
\end{cases}
\quad \text{$\nu$-a.e.\ on $\left\{|(\bfx,y)|_{\R^n}<R\right\}$.}
\]
Therefore, \Eq{pz} holds with ``$=$'' replaced by ``$\ge$''.
\end{example}
\begin{example}[fractals]
The construction of canonical Dirichlet forms on (self-similar) fractals has been studied extensively.
It is not easy to determine the exact value of the index; one of the reasons is that the energy measures do not have simple expressions.
According to \cite{Hi11p}, the index of a Dirichlet form associated with a regular harmonic structure on a post-critically finite, self-similar connected set is always $1$, and that of the Dirichlet form corresponding to Brownian motion on a class of generalized Sierpinski carpets is dominated by its spectral dimension, which does not exceed the Hausdorff dimension.
See \cite{Hi11p} and the references therein for further details.
\end{example}
\section{Measurable Riemannian structure}
We retain the general notations used in the previous section.
For $r=0,1,\dots,p$, set 
\[
X(r)=\{x\in X\mid p(x)=r\}.
\]
From \cite[Proposition~2.11]{Hi10}, $\nu(X(0))=0$.
In particular, $p=0$ if and only if $\cE\equiv0$.
Hereafter, we assume that the index $p$ of $(\cE,\cF)$ is finite and greater than $0$.
Denote the $p$~direct products of $\cF$ by $\cF^p$, and equip $\cF^p$ with the product topology.
We define subsets $\cG$ and $\hat \cG$ of $\cF^p$ by
\begin{align*}
\cG&=\left\{(g_1,\dots,g_p)\in\cF^p\;\vrule\;\;\parbox{0.53\textwidth}{For $\nu$-a.e.\,$x\in X$, the matrix
$\left(\dfrac{d\mu_{\la g_i,g_j\ra}}{d\nu}(x)\right)_{i,j=1}^{p(x)}$
of size $p(x)$ is invertible}\right\},\\
\hat\cG&=\left\{(g_1,\dots,g_p)\in\cG\;\vrule\;\;
\text{For every $i=1,\dots,p$, $\mu_{\la g_i\ra}$ is an m.\,e.\,d.\,m.}\right\}.
\end{align*}
The determination of these sets is independent of the choice of m.\,e.\,d.\,m.~$\nu$.
\begin{theorem}\label{th:main1}
Sets $\cG$ and $\hat\cG$ are dense in $\cF^p$.
\end{theorem}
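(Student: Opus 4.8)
The plan is to prove the stronger assertion that $\hat\cG$ is dense, since $\hat\cG\subset\cG$ and density of $\hat\cG$ immediately yields density of $\cG$. Throughout I fix a countable dense subset $\{f_k\}_{k\ge1}$ of the separable Hilbert space $\cF$ and abbreviate the densities $Z(f,g)(x):=\frac{d\mu_{\la f,g\ra}}{d\nu}(x)$, so that for any finite tuple the matrix $\bigl(Z(f_i,f_j)(x)\bigr)_{i,j}$ is symmetric, positive semidefinite, and of rank at most $p(x)$ for $\nu$-a.e.\ $x$. The whole argument rests on one preparatory fact: for $\nu$-a.e.\ $x$ the countable family $\{f_k\}$ already realizes the pointwise index, i.e.\ there are indices $k_1,\dots,k_{p(x)}$ with $\det\bigl(Z(f_{k_a},f_{k_b})(x)\bigr)_{a,b=1}^{p(x)}\neq0$.

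To establish this, let $r(x)$ denote the $\nu$-essential supremum, over all finite subfamilies of $\{f_k\}$, of the rank of their Gram matrix; clearly $r(x)\le p(x)$. For the reverse inequality it suffices to show that $r$ dominates \emph{every} finite tuple from $\cF$ in the sense of \Defn{index}(1)(a), for then minimality forces $p(x)\le r(x)$. Given $f_1,\dots,f_N\in\cF$, I approximate each in $\cF$ by members of $\{f_k\}$. The continuity estimate recalled in Section~2, namely $\bigl|\sqrt{\mu_{\la f\ra}(B)}-\sqrt{\mu_{\la g\ra}(B)}\bigr|^2\le2\cE(f-g)$, together with the bound $\mu_{\la f\ra}(X)=2\cE(f)$ on total masses, yields convergence of the energy and mutual-energy measures in total variation, hence convergence of all entries $Z(\cdot,\cdot)$ in $L^1(\nu)$; passing to a common subsequence gives convergence $\nu$-a.e. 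Since matrix rank is lower semicontinuous, the rank of the limiting Gram matrix does not exceed the limit inferior of the approximating ranks, which are all $\le r(x)$. Thus $r=p$ $\nu$-a.e.

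For density, fix $\bfg^0=(g^0_1,\dots,g^0_p)\in\cF^p$ and $\eps>0$, and introduce a random perturbation $g_i=g^0_i+\sum_{k\ge1}s_{i,k}\,a_k f_k$, where the positive constants $a_k$ satisfy $\sum_k a_k\|f_k\|_\cF\le\eps$, so that the series converge in $\cF$ and $\|g_i-g^0_i\|_\cF\le\eps$ uniformly over parameters $s_{i,k}\in[-1,1]$. Equip the parameter cube with the product $P$ of normalized Lebesgue measures. For fixed $x$, the leading minor $\det\bigl(Z(g_i,g_j)(x)\bigr)_{i,j=1}^{p(x)}$ is a polynomial on the whole parameter space; by the preparatory fact one may choose parameters singling out the basis $f_{k_1},\dots,f_{k_{p(x)}}$ with a large common weight to make this determinant nonzero, so the polynomial does not vanish identically and its zero set is $P$-null. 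Likewise, for each $i$ the quadratic $Z(g_i,g_i)(x)$ is not identically zero in the parameters because $p(x)\ge1$ forces some $Z(f_k,f_k)(x)>0$, so its zero set is $P$-null as well. Tonelli's theorem applied to the jointly measurable integrands $(\omega,x)\mapsto\mathbf 1_{\{\det=0\}}$ and $(\omega,x)\mapsto\mathbf 1_{\{Z(g_i,g_i)=0\}}$ interchanges the integrations and shows that for $P$-a.e.\ parameter value the corresponding exceptional $x$-sets are all $\nu$-null simultaneously. Any such parameter produces a tuple in $\hat\cG$ within $\eps$ of $\bfg^0$, proving the theorem.

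The main obstacle is the preparatory fact — that a single countable family attains the pointwise index $\nu$-a.e. — since it is here that the abstract definition of $p(x)$ as a minimal dominating rank function must be converted into a pointwise statement; the conversion is exactly what the total-variation continuity of $f\mapsto\mu_{\la f\ra}$ and the lower semicontinuity of matrix rank are for, and it also supplies the measurability needed to legitimize the Tonelli step. By comparison, the genericity argument, although it carries the geometric content of making the leading minors nondegenerate and each $\mu_{\la g_i\ra}$ an m.\,e.\,d.\,m., is essentially the elementary fact that a nonzero polynomial vanishes only on a null set, promoted to the fiberwise setting by Fubini--Tonelli.
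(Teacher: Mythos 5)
Your proof has the same two-stage architecture as the paper's: first, a countable family of functions that realizes the pointwise index $\nu$-a.e.; second, a randomization-plus-Fubini argument showing that almost every random perturbation lands in $\hat\cG$. Your preparatory fact is exactly the content of \Eq{px} (i.e.\ \cite[Proposition~2.10]{Hi10}, which the paper simply cites), and your proof of it --- total-variation continuity of $(f,g)\mapsto\mu_{\la f,g\ra}$, a.e.\ convergence along a subsequence, lower semicontinuity of rank, then minimality in \Defn{index} --- is sound. Where you genuinely diverge is the randomization device: the paper parametrizes a dense subset of $\cF$ linearly by $\Psi\colon\ell_2\to\cF$ and uses a measure $\kp$ satisfying \Eq{kappa}, so that by \Lem{dim} the exceptional fibers are finite unions of proper closed subspaces and hence $\kp$-null, with no polynomial considerations at all; you instead perturb an arbitrary $\bfg^0\in\cF^p$ by a random series with coefficients in an infinite-dimensional cube, so your exceptional fibers are zero sets of parameter-dependent determinants.

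That is where the gap lies. Your determinant depends on \emph{infinitely} many parameters, and the principle you invoke --- that a measurable function which is a polynomial on every finite-dimensional slice and is not identically zero has $P$-null zero set --- is false in infinitely many variables: $F(s)=\prod_{k=1}^\infty s_k^2$ on $[-1,1]^{\N}$ is a polynomial in any finite block of coordinates when the others are frozen, and $F(1,1,\dots)=1$, yet $F=0$ $P$-a.e.\ (strong law of large numbers applied to $\sum_k\log s_k^2$). So exhibiting a single parameter value where the determinant is nonzero does not finish the argument (moreover, your ``large common weight'' is not even available, since $|s_{i,k}|\le 1$). The repair uses the specific algebraic structure: freeze all coordinates except $s_{1,k_1},\dots,s_{p(x),k_{p(x)}}$; by bilinearity of $(f,g)\mapsto d\mu_{\la f,g\ra}/d\nu$, each matrix entry is then a polynomial of degree at most two in these variables, so the only contributions to the monomial $\prod_{i}s_{i,k_i}^2$ in the determinant come from the top-degree parts of the entries, and its coefficient equals $\bigl(\prod_i a_{k_i}^2\bigr)\det\bigl(Z(f_{k_a},f_{k_b})(x)\bigr)_{a,b=1}^{p(x)}\ne 0$ \emph{independently of the frozen coordinates}. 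Hence every slice is a nonzero polynomial, its zero set is Lebesgue-null, and Fubini over the frozen coordinates gives $P$-nullity; the same slicing handles the condition $Z(g_i,g_i)(x)>0$. Finally, your Tonelli step also presupposes a version of $(s,x)\mapsto Z(g_i,g_j)(x)$ that is jointly measurable and, for each fixed $s$, a genuine $\nu$-version of $d\mu_{\la g_i,g_j\ra}/d\nu$; this must be constructed by fixing Borel versions for the countably many pairs from $\{g^0_1,\dots,g^0_p\}\cup\{f_k\}$ and checking, via \Eq{schwarz} and $\sum_k a_k\|f_k\|_\cF<\infty$ (with $\nu$ chosen finite), that the expanded double series converges absolutely $\nu$-a.e.\ and represents the density --- this is precisely the role played in the paper's proof by the martingale construction of the $Z^{i,j}$ and by \Eq{version}.
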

\begin{proof}
The idea of the proof is based on that of \cite[Proposition~2.7]{Hi10}.
Take a c.o.n.s.\ $\{f_i\}_{i=1}^\infty$ of $\cF$.
Since the finite Borel measure $\sum_{i=1}^\infty 2^{-i}\mu_{\la f_i\ra}$ is an m.\,e.\,d.\,m.\ of $(\cE,\cF)$ from \cite[Lemma~2.3]{Hi10}, we may take $\sum_{i=1}^\infty 2^{-i}\mu_{\la f_i\ra}$ as $\nu$.
Let $\{\cB_n\}_{n=1}^\infty$ be a sequence of $\sg$-fields on $X$ such that $\cB_1\subset \cB_2\subset\cB_3\subset\cdots$, $\sg(\cB_n;n\in\N)$ is equal to the Borel $\sg$-field on $X$, and each $\cB_n$ is generated by a finite number of Borel subsets of $X$.
For each $n\in\N$, $\cB_n$ is determined by a partition of $X$ consisting of finitely many disjoint Borel sets $B_n^1,\dotsc,B_n^{M_n}$ for some $M_n\in \N$.
For each $i,j\in\N$, the Radon--Nikodym derivative $Z_n^{i,j}$ of $\mu_{\la f_i,f_j\ra}|_{\cB_n}$ with respect to $\nu|_{\cB_n}$ is defined as
\[
  Z_n^{i,j}(x)=\sum_{\a=1}^{M_n}\frac{\mu_{\la f_i,f_j\ra}(B_n^\a)}{\nu(B_n^\a)}\cdot 1_{B_n^\a}(x),
  \quad x\in X,
\]
where $0/0:=1$ by convention.
We define
\begin{equation*}
  X'=\{x\in X\mid \mbox{For every $i,j\in \N$, $Z_n^{i,j}(x)$ converges as $n\to\infty$}\}.
\end{equation*}
From the martingale convergence theorem, $\nu(X\setminus X')=0$.
For each $i,j\in \N$, we define
\begin{equation*}
  Z^{i,j}(x)=
  \begin{cases}
  \lim_{n\to\infty}Z_n^{i,j}(x) & \mbox{if }x\in X' \\
  0 & \mbox{if }x\in X\setminus X'.
  \end{cases}
\end{equation*}
Then, $Z^{i,j}$ is a Borel measurable representative of $d\mu_{\la f_i,f_j\ra}/d\nu$.
From \cite[Lemma~2.6]{Hi10}, $\bigl(Z^{i,j}(x)\bigr)_{i,j=1}^N$ is a nonnegative definite symmetric matrix for all $x\in X$ and $N\in\N$.
From \cite[Proposition~2.10]{Hi10}, for $\nu$-a.e.\,$x\in X$,
\begin{equation}\label{eq:px}
  p(x)=\sup_{N\in\N}\;\rank\,\bigl(Z^{i,j}(x)\bigr)_{i,j=1}^N.
\end{equation}
We may assume that the above identity holds for all $x\in X$ by redefining $p(x)$ as the right-hand side of \Eq{px}.

Let $\ell_2$ denote the usual $\ell_2$ space consisting of all real square-summable sequences.
The canonical inner product of $\ell_2$ is denoted by $(\cdot,\cdot)_{\ell_2}$.
Fix a finite Borel measure $\kp$ on $\ell_2$ such that the following properties hold:
\begin{equation}\label{eq:kappa}
\text{$\supp \kp=\ell_2$ and $\kp(L)=0$ for any proper closed subspace $L$ of $\ell_2$.}
\end{equation}
For example, it suffices to take a nondegenerate Gaussian measure on $\ell_2$ as $\kp$.

We define a map $\Psi\colon \ell_2\to\cF$ as
\[
  \Psi(\bfa)=\sum_{i=1}^\infty a_i 2^{-i/2}f_i
  \quad \text{for}\ \ \bfa=(a_i)_{i=1}^\infty\in\ell_2,
\]
where the limit on the right-hand side is taken in the topology of $\cF$.
It is easy to see that $\Psi$ is a contraction map and that $\Psi(\ell_2)$ is dense in $\cF$.

For $\bfa=(a_i)_{i=1}^\infty\in\ell_2$ and $N\in\N$, set $g_N(x):=\sum_{i=1}^N a_i 2^{-i/2}f_i(x)$ for $x\in X$ and $g:=\Psi(\bfa)$.
Since $\lim_{N\to\infty}g_N= g$ in $\cF$, from \cite[Lemma~2.5~(ii)]{Hi10}, we have that ${d\mu_{\la g_N\ra}}/{d\nu}$ converges to ${d\mu_{\la g\ra}}/{d\nu}$ as $N\to\infty$ in $L^1(X;\nu)$.
On the other hand, 
\begin{equation*}
\frac{d\mu_{\la g_N\ra}}{d\nu}(x)=\sum_{i,j=1}^N a_ia_j 2^{-(i+j)/2}Z^{i,j}(x) \text{ for $\nu$-a.e.\,} x\in X
\end{equation*}
and the right-hand side is absolutely convergent as $N\to\infty$ for each $x\in X$ from \cite[Eq.~(2.9)]{Hi10}.
Therefore, 
\begin{equation}
\frac{d\mu_{\la g\ra}}{d\nu}(x)=\lim_{N\to\infty}\frac{d\mu_{\la g_N\ra}}{d\nu}(x)
\quad \text{for $\nu$-a.e.\,$x\in X$}.
\end{equation}
For $x\in X$, let
\[
\Phi_x(\bfa,\bfb):=\sum_{i,j=1}^\infty a_i b_j 2^{-(i+j)/2}Z^{i,j}(x)
\quad \text{for $\bfa=(a_i)_{i=1}^\infty\in\ell_2$ and $\bfb=(b_i)_{i=1}^\infty\in\ell_2$}.
\]
As seen from \cite[p.~275]{Hi10}, $\Phi_x$ is a bounded symmetric bilinear form on $\ell_2$, which implies that there exists a bounded symmetric operator $A_x$ on $\ell_2$ such that $\Phi_x(\bfa,\bfb)=(\bfa,A_x\bfb)_{\ell_2}$ for every $\bfa,\bfb\in\ell_2$.
Moreover, 
\begin{equation}\label{eq:version}
\text{$\Phi_{\cdot}(\bfa,\bfb)$ is a $\nu$-version of $\frac{d\mu_{\la\Psi(\bfa),\Psi(\bfb)\ra}}{d\nu}(\cdot)$},
\end{equation}
and $\ker A_x=\{\bfa\in\ell_2\mid \Phi_x(\bfa,\bfa)=0\}$.
We denote $\ker A_x$ by $N_x$.
Then, we have the following identity.
\begin{lem}\label{lem:dim}
$\dim (\ell_2/N_x)=p(x)$.
\end{lem}
\begin{proof}
This lemma does not require the finiteness of $p$.
Let $\ell_0$ be a subspace of $\ell_2$ defined by
\[
\ell_0=\{\bfa=(a_i)_{i=1}^\infty\mid \mbox{$\bfa$ is a real sequence and $a_i=0$ except for finitely many $i$}\}.
\]
From \cite[Lemma~3.1]{Hi10}, $\dim(\ell_0/(N_x\cap \ell_0))=p(x)$.
Since $\ell_0$ is dense in $\ell_2$, $\ell_0/(N_x\cap \ell_0)$ is densely imbedded in $\ell_2/N_x$ with respect to the quotient topologies.
Therefore, $\dim(\ell_0/(N_x\cap \ell_0))=\dim(\ell_2/N_x)$.
\end{proof}
Let us return to the proof of \Thm{main1}.
We denote the $p$-direct product of $(\ell_2,\kp)$ by $((\ell_2)^p,\kp^{\otimes p})$ and define
\[
C=\left\{(x,\bfa^{(1)},\dots,\bfa^{(p)})\in X\times (\ell_2)^p\;\;\vrule\;\;\parbox{0.35\textwidth}{$p(x)\ge1$ and the matrix $\left(\Phi_x(\bfa^{(i)},\bfa^{(j)})\right)_{i,j=1}^{p(x)}$ is invertible}\right\}.
\]
Since $C\cap(X(r)\times(\ell_2)^p)$ is Borel measurable for each $r=1,\dots,p$, so is $C$.

Let $x\in X(1)$.
Then, $(x,\bfa^{(1)},\dots,\bfa^{(p)})\notin C$ if and only if $\bfa^{(1)}\in N_x$.
Since $\dim (\ell_2/N_x)=p(x)=1$ from \Lem{dim}, $N_x$ is a proper closed subspace of $\ell_2$. 
Therefore, $\kp(N_x)=0$ from \Eq{kappa} and
\begin{equation}\label{eq:null1}
(\nu\otimes\kp^{\otimes p})\left((X(1)\times(\ell_2)^p)\setminus C\right)=0.
\end{equation}
Let $2\le r\le p$ and $x\in X(r)$.
Then, $(x,\bfa^{(1)},\dots,\bfa^{(p)})\notin C$ if and only if,
\begin{align*}
&\bfa^{(1)}\in N_x\\
&\mbox{or }\bfa^{(2)}\in N_{x,\bfa^{(1)}}:=\mbox{the linear span of }\bigl(N_x\cup\{\bfa^{(1)}\}\bigr)\\
&\mbox{or }\bfa^{(3)}\in N_{x,\bfa^{(1)},\bfa^{(2)}}:=\mbox{the linear span of }\bigl(N_x\cup\{\bfa^{(1)},\bfa^{(2)}\}\bigr)\\
&\mbox{or }\cdots\\
&\mbox{or }\bfa^{(r)}\in N_{x,\bfa^{(1)},\dots,\bfa^{(r-1)}}:=\mbox{the linear span of }\bigl(N_x\cup\{\bfa^{(1)},\dots,\bfa^{(r-1)}\}\bigr).
\end{align*}
Since $\dim (\ell_2/N_x)=p(x)=r$, we have $\dim(\ell_2/N_{x,\bfa^{(1)},\dots,\bfa^{(s)}})\ge1$ for each $s=1,\dots,r-1$.
In particular, $N_{x,\bfa^{(1)},\dots,\bfa^{(s)}}$ is a proper closed subspace of $\ell_2$.
Therefore, $\kp(N_{x,\bfa^{(1)},\dots,\bfa^{(s)}})=0$, which implies that
\begin{equation}\label{eq:null2}
(\nu\otimes\kp^{\otimes p})((X(r)\times(\ell_2)^p)\setminus C)=0.
\end{equation}
From \Eq{null1}, \Eq{null2}, and the equality $\nu(X(0))=0$, we have
\begin{equation}\label{eq:nulla}
(\nu\otimes\kp^{\otimes p})((X\times(\ell_2)^p)\setminus C)=0.
\end{equation}
Next, we define
\[
\hat C=\left\{(x,\bfa^{(1)},\dots,\bfa^{(p)})\in X\times (\ell_2)^p\;\;\vrule\;\Phi_x(\bfa^{(i)},\bfa^{(i)})>0\text{ for every }i=1,\dots,p\right\}.
\]
Then, $(x,\bfa^{(1)},\dots,\bfa^{(p)})\notin \hat C$ if and only if $\bfa^{(i)}\in N_x$ for some $i=1,\dots,p$.
Since for every $x\in X$, $\kp^{\otimes p}(\{(\bfa^{(1)},\dots,\bfa^{(p)})\in (\ell_2)^p\mid (x,\bfa^{(1)},\dots,\bfa^{(p)})\notin\hat C\})=0$ from \Lem{dim} and \Eq{kappa}, we have 
\begin{equation}\label{eq:nullb}
  (\nu\otimes \kp^{\otimes p})\left((X\times(\ell_2)^p)\setminus \hat C\right)=0.
\end{equation}
From \Eq{version}, \Eq{nulla}, \Eq{nullb}, and Fubini's theorem, for $\kp^{\otimes p}$-a.e.\,$(\bfa^{(1)},\dots,\bfa^{(p)})\in (\ell_2)^p$, we have that
\[
\left(\frac{d\mu_{\la \Psi(\bfa^{(i)}),\Psi(\bfa^{(j)})\ra}}{d\nu}(x)\right)_{i,j=1}^{p(x)} \text{ is invertible}
\] 
and 
\[
\text{$\mu_{\la \Psi(\bfa^{(i)})\ra}$ is an m.\,e.\,d.\,m.\ for all $i=1,\dots,p$}
\]
for $\nu$-a.e.\,$x\in X$.
Since $(\Psi(\ell_2))^p$ is dense in $\cF^p$, we obtain that $\hat \cG$, along with $\cG$, is dense in $\cF^p$.
\end{proof}
We fix $\bfg=(g_1,\dots,g_p)\in\cG$ and write $Z_\bfg^{i,j}$ for ${d\mu_{\la g_i,g_j\ra}}/{d\nu}$ for $i,j=1,\dots,p$.
Let $r\in\{1,\dots,p\}$.
We denote the matrix
$\left(Z_\bfg^{i,j}(x)\right)_{\!i,j=1}^r$
by $Z_{\bfg,r}(x)$ for $x\in X$.
For $\nu$-a.e.\,$x\in X(r)$, $Z_{\bfg,r}(x)$ is invertible by definition of $\cG$.

Let $f\in \cF$.
We define a $\nu$-measurable $\R^r$-valued function $\bfu_r$ on $X$ as 
\begin{equation}\label{eq:ur}
\bfu_r=\left[\begin{array}{c}
d\mu_{\la f,g_1\ra}/d\nu\\\vdots\\d\mu_{\la f,g_r\ra}/d\nu\end{array}\right].
\end{equation}
\begin{lem}\label{lem:Kr}
 For each $r=1,\dots,p$,
 \begin{equation}\label{eq:Kr}
\frac{d\mu_{\la f\ra}}{d\nu}=\tr\bfu_r Z_{\bfg,r}^{-1}\bfu_r\quad \nu\text{-a.e. on }X(r).
\end{equation}
\end{lem}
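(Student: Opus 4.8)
The plan is to deduce \Eq{Kr} from a single pointwise linear-algebra identity, applied $\nu$-a.e.\ on $X(r)$. Fix $r\in\{1,\dots,p\}$ and $f\in\cF$, and adjoin $f$ to $\bfg$ by considering the $r+1$ functions $h_0:=f$ and $h_i:=g_i$ for $i=1,\dots,r$. Since $\nu$ is an m.\,e.\,d.\,m., each $\mu_{\la h_i,h_j\ra}\ll\nu$, so I may choose Borel versions of all derivatives $d\mu_{\la h_i,h_j\ra}/d\nu$ and assemble, for $\nu$-a.e.\ $x$, the symmetric $(r+1)\times(r+1)$ matrix
\[
M(x):=\begin{pmatrix} d\mu_{\la f\ra}/d\nu(x) & \tr\bfu_r(x)\\ \bfu_r(x) & Z_{\bfg,r}(x)\end{pmatrix},
\]
whose lower-right block is $Z_{\bfg,r}(x)$ and whose first row and column off the corner are $\tr\bfu_r(x)$ and $\bfu_r(x)$, by the definition \Eq{ur} of $\bfu_r$ and the symmetry of mutual energy measures.

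First I would record two pointwise facts holding for $\nu$-a.e.\ $x\in X(r)$. Applying the defining inequality of the pointwise index in \Defn{index} to the $r+1$ functions $h_0,\dots,h_r$ gives $\rank M(x)\le p(x)=r$. On the other hand, by the definition of $\cG$, the block $Z_{\bfg,r}(x)$ is invertible, hence of rank exactly $r$. Each assertion fails only on a $\nu$-null subset of $X(r)$, so both hold simultaneously $\nu$-a.e.\ on $X(r)$, and I restrict attention to such $x$.

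Next I would invoke the Schur-complement factorization. Because $Z_{\bfg,r}(x)$ is invertible, $M(x)$ equals the product of a unit upper-triangular matrix, the block-diagonal matrix with diagonal blocks $d\mu_{\la f\ra}/d\nu(x)-\tr\bfu_r(x)\,Z_{\bfg,r}(x)^{-1}\bfu_r(x)$ and $Z_{\bfg,r}(x)$, and a unit lower-triangular matrix. The outer factors are invertible, so
\[
\rank M(x)=r+\rank\Bigl(d\mu_{\la f\ra}/d\nu(x)-\tr\bfu_r(x)\,Z_{\bfg,r}(x)^{-1}\bfu_r(x)\Bigr),
\]
the second summand being the rank of a scalar. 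Comparing with $\rank M(x)\le r$ forces that scalar Schur complement to vanish, which is precisely \Eq{Kr}.

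The argument is essentially linear algebra, so I expect no deep obstacle; the step demanding the most care is the bookkeeping of versions---choosing Borel representatives of all the Radon--Nikodym derivatives so that the pointwise rank and invertibility assertions are simultaneously meaningful $\nu$-a.e., exactly as was arranged for the $Z^{i,j}$ in the proof of \Thm{main1}. The nonnegative definiteness of $M(x)$ (inherited from \cite[Lemma~2.6]{Hi10}) is not needed for the rank computation, but it provides a consistency check, since it forces the Schur complement to be $\ge0$ even before one concludes that it is $0$.
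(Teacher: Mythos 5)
Your proposal is correct and is essentially the paper's own argument: the paper forms the same $(r+1)\times(r+1)$ matrix (with $f$ in the last slot rather than the first), notes it is singular $\nu$-a.e.\ on $X(r)$ by the definition of the pointwise index, and extracts the vanishing of the Schur complement by multiplying by a unit-triangular matrix and taking determinants, which is just the determinantal form of your rank computation. The only differences are cosmetic (rank arithmetic versus $\det=0$, and block ordering).
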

\begin{proof}
From the definition of $X(r)$, the $\R^{(r+1)\times(r+1)}$-valued function
$\renewcommand{\arraystretch}{1.2}\left(\begin{array}{c|c}Z_{\bfg,r}&\bfu_r\\\hline\tr\bfu_r&d\mu_{\la f\ra}/d\nu\end{array}\right)$ is not invertible $\nu$-a.e.\ on $X(r)$.
Then, on $X(r)$,
\begin{align*}
0&=\renewcommand{\arraystretch}{1.2}\det\left(\left(\begin{array}{c|c}\strut Z_{\bfg,r}&\bfu_r\\\hline\strut\tr\bfu_r&d\mu_{\la f\ra}/d\nu\end{array}\right)
\left(\begin{array}{c|c}Z_{\bfg,r}^{-1}&-Z_{\bfg,r}^{-1}\bfu_r\\\hline\bfz&1\end{array}\right)\right)\\
&=\renewcommand{\arraystretch}{1.2}\det\left(\begin{array}{c|c}I&\bfz\\\hline\tr\bfu_r Z_{\bfg,r}^{-1}&-\tr\bfu_r Z_{\bfg,r}^{-1}\bfu_r+d\mu_{\la f\ra}/d\nu\end{array}\right)\\
&=-\tr\bfu_r Z_{\bfg,r}^{-1}\bfu_r+\frac{d\mu_{\la f\ra}}{d\nu}\quad \nu\text{-a.e.},
\end{align*}
that is, \Eq{Kr} holds.
\end{proof}
We denote a quasi-continuous modification of $f$ by $\tilde f$.
The following theorem is a generalization of \cite[Theorem~5.4]{Hi10}.
\begin{theorem}[analogue of differentials of functions in $\cF$]\label{th:main2}
There exists a $\nu$-measurable $\R^p$-valued function $\nabla\!_\bfg f=\tr(\partial^{(1)}f,\dots,\partial^{(p)}f)$ on $X$ such that the following hold: 
\begin{equation}\label{eq:zero}
\text{For $\nu$-a.e.\,$x$, $\partial^{(i)}f(x)=0$ for all $i>p(x)$},
\end{equation}
and
\begin{equation}\label{eq:taylor}
\tilde f(y)-\tilde f(x)=\sum_{i=1}^{p(x)} \partial^{(i)}f(x)\bigl(\tilde g_i(y)-\tilde g_i(x)\bigr)+R_x(y),\quad y\in X,
\end{equation}
where $R_x(\cdot)\in\cFloc$ is negligible at $x$ in the sense that 
\begin{equation}\label{eq:neg}
\frac{d\mu_{\la R_x\ra}}{d\nu}(x)=0
\quad\text{for $\nu$-a.e.\,$x$}.
\end{equation}
The function $\nabla\!_\bfg f$ is uniquely determined up to $\nu$-equivalence.
Moreover, the following identity holds\rom{:}
\begin{equation}\label{eq:rep}
\cE(f,h)=\frac12\int_X\bigl( Z_\bfg\,\nabla\!_\bfg f,\nabla\!_\bfg h\bigr)_{\R^p}\,d\nu,\quad f,h\in\cF.
\end{equation}
\end{theorem}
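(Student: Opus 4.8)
The plan is to define $\nabla\!_\bfg f$ by the obvious explicit formula dictated by \Lem{Kr} and then read off each assertion from that lemma together with elementary linear algebra. Concretely, on the set $X(r)$ (for each $r=1,\dots,p$) I would put
\begin{equation*}
\tr(\partial^{(1)}f,\dots,\partial^{(r)}f):=Z_{\bfg,r}^{-1}\bfu_r
\quad\text{and}\quad \partial^{(i)}f:=0\ \ (i>r),
\end{equation*}
with $\bfu_r$ as in \Eq{ur}; this is legitimate since $Z_{\bfg,r}(x)$ is invertible $\nu$-a.e.\ on $X(r)$ by the definition of $\cG$. It is $\nu$-measurable because the entries of $Z_{\bfg,r}^{-1}$ and of $\bfu_r$ are, and it depends linearly on $f$ since $\bfu_r$ does through the mutual energy measures. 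Property \Eq{zero} holds by construction, and \Eq{taylor} is then merely the definition of the remainder $R_x$; the actual content is that this $R_x$ is negligible.

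For \Eq{neg}, fix $x$, set $r=p(x)$ and $c_i:=\partial^{(i)}f(x)$. Since $R_x$ differs from $f-\sum_{i=1}^{r}c_ig_i\in\cF$ by an additive constant, which lies in $\cFloc$, we have $R_x\in\cFloc$ and, by \Eq{constant}, $\mu_{\la R_x\ra}=\mu_{\la f-\sum_ic_ig_i\ra}$. Expanding the Radon--Nikodym derivative by bilinearity, evaluating at $x$, and substituting $\tr(c_1,\dots,c_r)=\tr\bfu_r Z_{\bfg,r}^{-1}$ gives
\begin{equation*}
\frac{d\mu_{\la R_x\ra}}{d\nu}(x)=\frac{d\mu_{\la f\ra}}{d\nu}(x)-2\,\tr\bfu_r Z_{\bfg,r}^{-1}\bfu_r+\tr\bfu_r Z_{\bfg,r}^{-1}\bfu_r=\frac{d\mu_{\la f\ra}}{d\nu}(x)-\tr\bfu_r Z_{\bfg,r}^{-1}\bfu_r.
\end{equation*}
By \Lem{Kr} this vanishes $\nu$-a.e.\ on $X(r)$, and ranging over $r$ (together with $\nu(X(0))=0$) yields \Eq{neg}.

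Uniqueness follows from the invertibility of $Z_{\bfg,r}$. Suppose two $\nu$-measurable $\R^p$-valued fields both satisfy \Eq{zero} and \Eq{taylor}; let $v_i$ denote the difference of their $i$-th components and $R_x,R_x'$ the corresponding remainders. Subtracting the two instances of \Eq{taylor} gives $S_x:=\sum_{i=1}^{r}v_i(x)(\tilde g_i-\tilde g_i(x))=R_x'-R_x$, the difference of two functions negligible at $x$. By the Cauchy--Schwarz inequality \Eq{schwarz} the mutual term $d\mu_{\la R_x,R_x'\ra}/d\nu(x)$ is dominated by the two vanishing diagonal terms, so $d\mu_{\la S_x\ra}/d\nu(x)=0$ as well. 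By \Eq{constant}, $d\mu_{\la S_x\ra}/d\nu(x)=\tr\bfv\,Z_{\bfg,r}(x)\,\bfv$ with $\bfv=\tr(v_1(x),\dots,v_r(x))$; since $Z_{\bfg,r}(x)$ is symmetric, nonnegative definite and invertible, hence positive definite, we get $\bfv=\bfz$, while the components of index $>r$ vanish by \Eq{zero}. Thus the two fields agree $\nu$-a.e.

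Finally, for \Eq{rep} I would start from \Eq{total} in its polarized form $\cE(f,h)=\frac12\mu_{\la f,h\ra}(X)=\frac12\int_X(d\mu_{\la f,h\ra}/d\nu)\,d\nu$ and reduce to the pointwise identity $d\mu_{\la f,h\ra}/d\nu=\tr(\nabla\!_\bfg f)\,Z_\bfg\,\nabla\!_\bfg h$ $\nu$-a.e. On $X(r)$ the components of $\nabla\!_\bfg f$ and $\nabla\!_\bfg h$ of index $>r$ vanish, so only the top-left $r\times r$ block $Z_{\bfg,r}$ of $Z_\bfg$ contributes and the right-hand side collapses to $\tr\bfu_r Z_{\bfg,r}^{-1}\bfu_r$ when $f=h$, which is exactly \Lem{Kr}; the bilinear case follows by polarizing \Lem{Kr} (equivalently, by polarizing the resulting quadratic identity and using the linearity of $\nabla\!_\bfg$ noted above). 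I expect the only genuinely delicate point to be the bookkeeping around the $x$-dependent family $R_x$ and the ``diagonal'' derivative $d\mu_{\la R_x\ra}/d\nu(x)$: one must arrange joint measurability in $x$ so that \Eq{neg} is a meaningful $\nu$-a.e.\ statement, and in the uniqueness step use \Eq{schwarz} to pass from negligibility of the individual remainders to that of their difference. Everything else is linear algebra driven by \Lem{Kr} and the positive-definiteness of $Z_{\bfg,r}$.
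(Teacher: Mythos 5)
Your proposal is correct and is essentially the paper's own proof: you define $\nabla\!_\bfg f$ by the same formula $Z_{\bfg,r}^{-1}\bfu_r$ on each $X(r)$ (with vanishing higher components), derive \Eq{neg} and \Eq{rep} from \Lem{Kr} exactly as the paper does (the paper reduces \Eq{rep} to the diagonal case $f=h$ by bilinearity, which is your polarization step), and prove uniqueness by the same combination of Cauchy--Schwarz (to kill the remainder terms at $x$) and positive definiteness of $Z_{\bfg,r}(x)$. The only cosmetic difference is in the uniqueness step, where you evaluate the energy density of $S_x=R'_x-R_x$ in two ways, while the paper expands the vanishing energy measure of the constant function $y\mapsto\sum_i h_i(x)\tilde g_i(y)+R'_x(y)$; these are the same computation.
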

In the statement above, the precise meaning of $({d\mu_{\la R_x\ra}}/{d\nu})(x)$ in \Eq{neg} is as follows.
First, we fix $\nu$-versions of $d\mu_{\la f\ra}/d\nu$, $d\mu_{\la f,g_i\ra}/d\nu$, and $d\mu_{\la g_i,g_j\ra}/d\nu$ for $i,j=1,\dots,p$.
When $R_x(y)$ is determined by \Eq{taylor} as a function of $y$ for fixed $x\in X$, $({d\mu_{\la R_x\ra}}/{d\nu})(x)$ is defined by letting $y=x$ in the natural $\nu$-version of $({d\mu_{\la R_x\ra}}/{d\nu})(y)$, that is,
\begin{equation}\label{eq:Rx}
\frac{d\mu_{\la R_x\ra}}{d\nu}(x)=
 \frac{d\mu_{\la f\ra}}{d\nu}(x)-2\sum_{i=1}^{p(x)} \partial^{(i)}f(x)\,\frac{d\mu_{\la f,g_i\ra}}{d\nu}(x)+\sum_{i,j=1}^{p(x)} \partial^{(i)}f(x)\,\partial^{(j)}f(x)\,\frac{d\mu_{\la g_i,g_j\ra}}{d\nu}(x).
\end{equation}
Here, we used \Eq{constant}.

From this theorem, we can regard the map $\bfg\colon X\to\R^p$ as a type of local coordinate system (although it is not necessarily injective), and $Z_\bfg(x)$ and $p(x)$ as the Riemannian metric and the dimension of the virtual tangent space of $x\in X$, respectively. 
Note that $Z_\bfg(\cdot)$ and $p(\cdot)$ make sense only $\nu$-almost everywhere, not everywhere.

In typical examples of Dirichlet forms $(\cE,\cF)$ on self-similar fractals~$X$, $\cF$ is characterized by a Besov space (see, e.g., \cite{Jo96,Kum00,Gr03}).
Even if $X$ is imbedded in the Euclidean space, functions in $\cF$ are generally far from smooth in the usual sense; 
nevertheless, \Thm{main2} implies that the infinitesimal behaviors of functions in $\cF$ can be described by those of representatives of $\cF$.
\begin{proof}[Proof of \Thm{main2}]
First, we prove \Eq{neg} and \Eq{rep} for a suitable function $\nabla\!_\bfg f$.
We define $\nabla\!_\bfg f=\tr(\partial^{(1)}f,\dots,\partial^{(p)}f)$ by
\begin{equation}\label{eq:gradient}
\left[\begin{array}{c}\partial^{(1)}f\\\vdots\\\partial^{(r)}f\end{array}\right]
=Z_{\bfg,r}^{-1}\bfu_r\text{ and }
\left[\begin{array}{c}\partial^{(r+1)}f\\\vdots\\\partial^{(p)}f\end{array}\right]
=\left[\begin{array}{c}0\\\vdots\\0\end{array}\right]\quad
\text{on $X(r)$, for $r=0,1,\dots,p$},
\end{equation}
where $\bfu_r$ is provided in \Eq{ur}.
Fix $r\in\{1,\dots,p\}$.
From \Eq{Rx}, for $\nu$-a.e.\,$x\in X(r)$,
\begin{align*}
\frac{d\mu_{\la R_x\ra}}{d\nu}(x)
&=\frac{d\mu_{\la f\ra}}{d\nu}(x)-2\sum_{i=1}^{r} \partial^{(i)}f(x)\frac{d\mu_{\la f,g_i\ra}}{d\nu}(x)+\sum_{i,j=1}^{r} \partial^{(i)}f(x)\,\partial^{(j)}f(x)\frac{d\mu_{\la g_i,g_j\ra}}{d\nu}(x)\\
&=\frac{d\mu_{\la f\ra}}{d\nu}(x)-2(Z_{\bfg,r}^{-1}\bfu_r,\bfu_r)_{\R^r}(x)+\left(\tr(Z_{\bfg,r}^{-1}\bfu_r)Z_{\bfg,r}(Z_{\bfg,r}^{-1}\bfu_r)\right)(x)\\
&=\frac{d\mu_{\la f\ra}}{d\nu}(x)-\left(\tr{\bfu_r}Z_{\bfg,r}^{-1}\bfu_r\right)(x).
\end{align*}
From \Lem{Kr}, the last term vanishes $\nu$-a.e.\ on $X(r)$.
Therefore, \Eq{neg} holds.
To prove \Eq{rep}, we may assume that $f=h$ since both sides of \Eq{rep} are bilinear in $f$ and $h$.
For $r\in\{1,\dots,p\}$, 
\begin{equation}
\bigl( Z_\bfg\,\nabla\!_\bfg f,\nabla\!_\bfg f\bigr)_{\R^p}
=\bigl(\bfu_r,Z_{\bfg,r}^{-1}\bfu_r\bigr)_{\R^r}
=\frac{d\mu_{\la f\ra}}{d\nu} \quad\text{$\nu$-a.e.\ on $X(r)$}
\end{equation}
from \Lem{Kr}, which implies that
\begin{equation}
\frac12\int_X\bigl( Z_\bfg\,\nabla\!_\bfg f,\nabla\!_\bfg f\bigr)_{\R^p}d\nu
=\frac12\int_X\frac{d\mu_{\la f\ra}}{d\nu}\,d\nu
=\frac12\mu_{\la f\ra}(X).
\end{equation}
From \Eq{total}, \Eq{rep} holds.

In order to prove the uniqueness of $\nabla\!_\bfg f$, suppose that $\hat\nabla\!_\bfg f=\tr(\hat\partial^{(1)}f,\dots,\hat\partial^{(p)}f)$ and $\hat R_x\in\cFloc$ ($x\in X)$ satisfy \Eq{zero}, \Eq{taylor}, and \Eq{neg}, with $\partial^{(i)}f$ and $R_x$ replaced by $\hat\partial^{(i)}f$ and $\hat R_x$, respectively.
Then,
\[
0=\sum_{i=1}^{p(x)}\left(\partial^{(i)}f(x)-\hat\partial^{(i)}f(x)\right)\left(\tilde g_i(y)-\tilde g_i(x)\right)+\left(R_x(y)-\hat R_x(y)\right),\quad y\in X.
\]
We denote $\partial^{(i)}f(x)-\hat\partial^{(i)}f(x)$ by $h_i(x)$ for $i=1,\dots,p$, and let $R'_x=R_x-\hat R_x$.
For $r\in\{1,\dots,p\}$ and $\nu$-a.e.\,$x\in X(r)$,
\begin{align*}
0&=\mu_{\left\la \sum_{i=1}^r h_i(x)g_i(\cdot)+R'_x(\cdot) \right\ra}\\
&=\sum_{i,j=1}^r h_i(x)h_j(x)\mu_{\la g_i,g_j\ra}+2\sum_{i=1}^r h_i(x)\mu_{\la g_i,R'_x\ra}+\mu_{\la R'_x\ra}.
\end{align*}
Then, since $({d\mu_{\la R'_x\ra}}/{d\nu})(x)=0$ and $({d\mu_{\la g_i,R'_x\ra}}/{d\nu})(x)=0$ for $\nu$-a.e.\,$x$,
\begin{align*}
0=\sum_{i,j=1}^r h_i(x)h_j(x)\frac{d\mu_{\la g_i,g_j\ra}}{d\nu}(x)
=\sum_{i,j=1}^r h_i(x)h_j(x)Z_\bfg^{i,j}(x)
\end{align*}
for $\nu$-a.e.\ $x\in X(r)$, which implies that $h_i(x)=0$ for all $i=1,\dots,r$.
Since $\partial^{(i)}f=\hat\partial^{(i)}f=0$ on $X(r)$ for $i=r+1,\dots,p$ from \Eq{zero}, we obtain the uniqueness of $\nabla\!_\bfg f$.
\end{proof}
\begin{remark}
From the proof of \Thm{main2}, we can define $\nabla\!_\bfg f$ for $f\in \cFloc$ in the natural way.
Also, $\nabla\!_\bfg$ satisfies the derivation property: for $f_1,\dots,f_k\in\cFloc$ and $\Psi\in C^1(\R^k)$, 
\begin{equation}
\nabla\!_\bfg \bigl(\Psi(f_1,\dots,f_k)\bigr)
=\sum_{i=1}^k \frac{\partial\Psi}{\partial x_i}(f_1,\dots,f_k)\nabla\!_\bfg f_i.
\end{equation}

\end{remark}
\section{Application to stochastic analysis}
In this section, we discuss an application to stochastic analysis.
We introduce some necessary notations, following Chapter~5 of \cite{FOT}.
Let us recall that the diffusion process $\{X_t\}$ associated with $(\cE,\cF)$ is defined on a filtered probability space $(\Omega,\cF_\infty,P,\{P_x\}_{x\in X_\Delta},\{\cF_t\}_{t\in[0,\infty)})$.
We denote the expectation with respect to $P_x$ by $E_x$.
Let $\cM$ be the set of all finite c\`adl\`ag additive functionals $M$ such that for each $t>0$, $E_x[M_t^2]<\infty$ and $E_x[M_t]=0$ for q.e.\,$x\in X$. 
By the strong locality of $(\cE,\cF)$, every $M\in\cM$ is, in fact, a continuous additive functional.
For $M\in\cM$, we denote its quadratic variation by $\la M\ra$, which is a positive continuous additive functional, and the Revuz measure of $\la M\ra$ by $\mu_{\la M\ra}$.
The measure $\mu_{\la M\ra}$ is also called the energy measure of $M$.
A signed measure $\mu_{\la M,L\ra}$ on $X$ for $M,L\in \cM$ is defined as
\[
 \mu_{\la M,L\ra}:=\frac12(\mu_{\la M+L\ra}-\mu_{\la M\ra}-\mu_{\la L\ra}).
\]
For $M\in \cM$, its energy $e(M)$ is defined as
\[
  e(M)=\sup_{t>0}\frac1{2t}\int_X E_x[M_t^2]\,m(dx)\,(\le+\infty).
\]
We set $\maruM=\{M\in\cM\mid e(M)<\infty\}$.
By setting
\[
e(M,L):=\frac12(e(M+L)-e(M)-e(L))\quad
\text{for }M,L\in\maruM,
\]
$\maruM$ becomes a Hilbert space with inner product $e(\cdot,\cdot)$.
For $M\in\maruM$ and $h\in L^2(X;\mu_{\la M\ra})$, the stochastic integral $h\bullet M\in\maruM$ is defined by the following characterization:
\[
  e(h\bullet M,L)=\frac12\int_X h\,d\mu_{\la M,L\ra}
  \quad\text{for every $L\in \maruM$}.
\]
We recall the following stochastic interpretation of the index.
\begin{theorem}[{\cite[Theorem~3.4]{Hi10}}]\label{th:martingaledim}
The index of $(\cE,\cF)$ coincides with the AF-martingale dimension of $\{X_t\}$ that is defined as the smallest number $p$ in $\Z_+\cup\{+\infty\}$ satisfying the following: There exists a sequence $\{M^{(k)}\}_{k=1}^p$ in $\maruM$ such that every $M\in\maruM$ has a stochastic integral representation
\begin{equation}\label{eq:si}
  M_t=\sum_{k=1}^p(h_k\bullet M^{(k)})_t,\quad t>0,\ P_x\mbox{-a.e.\ for q.e.\,}x\in X
\end{equation}
with some $h_k\in L^2(X;\mu_{\la M^{(k)}\ra})$ for $k=1,\dotsc,p$.
\end{theorem}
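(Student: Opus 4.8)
The plan is to establish the two inequalities $p\le p_{\mathrm{AF}}$ and $p_{\mathrm{AF}}\le p$, where $p$ is the index of $(\cE,\cF)$ and $p_{\mathrm{AF}}$ is the AF-martingale dimension, by carrying the pointwise linear algebra behind \Defn{index} over from functions in $\cF$ to additive functionals in $\maruM$. The bridge is the Fukushima decomposition (\cite[Chapter~5]{FOT}): for $f\in\cF$ one has $\tilde f(X_t)-\tilde f(X_0)=M^{[f]}_t+N^{[f]}_t$ with $M^{[f]}\in\maruM$ and $N^{[f]}$ of zero energy, the Revuz measure of $\la M^{[f]}\ra$ equals $\mu_{\la f\ra}$, so $\mu_{\la M^{[f]},M^{[h]}\ra}=\mu_{\la f,h\ra}$; moreover the martingale parts $\{M^{[f]}\mid f\in\cF\}$ span an $e$-dense subspace of $\maruM$. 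First I would record that $\mu_{\la M\ra}\ll\nu$ for every $M\in\maruM$: this holds on the dense family $\{M^{[f]}\}$ by domination of $\nu$ and extends to the $e$-closure via $\bigl|\sqrt{\mu_{\la M\ra}(B)}-\sqrt{\mu_{\la L\ra}(B)}\bigr|^2\le\mu_{\la M-L\ra}(X)=2e(M-L)$. I would then set $Z^{M,L}:=d\mu_{\la M,L\ra}/d\nu$ and note, as in \Eq{px}, that over finite families in $\maruM$ the $\nu$-a.e.\ rank of $\bigl(Z^{M^{(i)},M^{(j)}}(x)\bigr)_{i,j}$ is governed by $p(x)$, since $\{M^{[f]}\}$ realizes the same Gram structure as $\{f\}$ and is dense.

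For $p\le p_{\mathrm{AF}}$, I would assume \Eq{si} holds with some $q$ generators $M^{(1)},\dots,M^{(q)}$. Using $\mu_{\la h\bullet M^{(k)},L\ra}=h\,\mu_{\la M^{(k)},L\ra}$ and bilinearity, every $M\in\maruM$ yields $Z^{M,L}(x)=\sum_{k=1}^q h_k(x)\,Z^{M^{(k)},L}(x)$ for all $L$ and $\nu$-a.e.\,$x$; so in the pointwise picture each $M$, in particular each $M^{[f]}$, lies in the span of $M^{(1)},\dots,M^{(q)}$. The rank above therefore never exceeds $q$, so by \Eq{px} we obtain $p(x)\le q$ $\nu$-a.e.\ and hence $p\le q=p_{\mathrm{AF}}$.

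For the converse $p_{\mathrm{AF}}\le p$, I would invoke \Thm{main1} to fix $\bfg=(g_1,\dots,g_p)\in\cG$, so that the leading block $Z_{\bfg,p(x)}(x)$ is invertible $\nu$-a.e., and put $M^{(k)}:=M^{[g_k]}$. Invertibility says that at $\nu$-a.e.\,$x$ the martingales $M^{(1)},\dots,M^{(p(x))}$ span the fiber, i.e.\ the $p$ generators span every fiber. Given $M\in\maruM$, I would define coefficients $h_k$ fiberwise by solving the linear system relating $\bigl(Z^{M,M^{(i)}}\bigr)_i$ to $Z_{\bfg,p(x)}$, exactly as $\nabla\!_\bfg$ is constructed in \Thm{main2}; the difference $M-\sum_{k=1}^p h_k\bullet M^{(k)}$ then has energy-measure density $0$ $\nu$-a.e., so $e\bigl(M-\sum_{k=1}^p h_k\bullet M^{(k)}\bigr)=\tfrac12\mu_{\la M-\sum_k h_k\bullet M^{(k)}\ra}(X)=0$. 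Since $e$ is a genuine norm on $\maruM$, the two additive functionals coincide q.e., which is \Eq{si} with $p$ generators, giving $p_{\mathrm{AF}}\le p$.

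The hard part will be the rigorous identification of the pointwise fiber dimension of $\maruM$ with the analytic index $p(x)$, together with the measurability of the coefficients. Concretely I would need to (i) construct a measurable field $x\mapsto H_x$ (a direct-integral decomposition $\maruM\cong\int_X^\oplus H_x\,\nu(dx)$) in which $M$ is represented by $x\mapsto Z^{M,\cdot}(x)$, (ii) prove $\dim H_x=p(x)$ $\nu$-a.e.\ by matching the rank formula \Eq{px} through the dense family $\{M^{[f]}\}$ (a \Lem{dim}-type argument), and (iii) verify that the $h_k$ obtained by fiberwise solving are jointly measurable with $h_k\in L^2(X;\mu_{\la M^{(k)}\ra})$, so that each $h_k\bullet M^{(k)}$ is well defined and of finite energy. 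This bookkeeping is where the real work lies; once it is in place the two inequalities are short, and the passage from the $e$-identity to the pathwise representation in \Eq{si} is automatic because $e(M)=\tfrac12\mu_{\la M\ra}(X)$ is a norm.
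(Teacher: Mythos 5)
This paper never proves \Thm{martingaledim}; it is imported from \cite[Theorem~3.4]{Hi10}, and the only proof in this paper touching on it is that of \Thm{main3}. Your first inequality, $p\le p_{\mathrm{AF}}$, is essentially the correct (and standard) argument: if $M^{[f_i]}=\sum_{k=1}^q h^i_k\bullet M^{(k)}$ for $i=1,\dots,N$, then $\bigl(d\mu_{\la f_i,f_j\ra}/d\nu\bigr)_{i,j=1}^N=A\,G\,\tr{\!A}$ $\nu$-a.e., with $A=(h^i_k)$ and $G$ the Gram density matrix of the generators, so the rank is at most $q$ $\nu$-a.e.\ for each fixed finite family, which is exactly what the definition of the index requires.

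The genuine gap is in the converse direction $p_{\mathrm{AF}}\le p$. Taking $M^{(k)}=M^{[g_k]}$ with $\bfg\in\cG$ from \Thm{main1} and solving $Z_{\bfg,r}\,\bfh=\bfv_r$ fiberwise produces exactly the conclusion of \Thm{main3}: some $\bfh\in L^2(X\to\R^p;\bfM^{[\bfg]})$ with $M=\bfh\bullet\bfM^{[\bfg]}$, where the right-hand side is the \emph{vector-valued} integral defined by $L^2$-approximation. That is strictly weaker than \Eq{si}, which demands $h_k\in L^2(X;\mu_{\la M^{(k)}\ra})$ for each $k$, so that every scalar integral $h_k\bullet M^{(k)}$ exists individually in $\maruM$. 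Your step (iii) asserts this, but it is not mere bookkeeping; it is false in general for this choice of generators: $Z_\bfg$ is invertible $\nu$-a.e.\ but not uniformly elliptic, and finiteness of $\int_X\tr\bfh Z_\bfg\bfh\,d\nu$ does not control $\int_X h_k^2\,Z_\bfg^{k,k}\,d\nu$ when the smallest eigenvalue of $Z_\bfg$ degenerates relative to its diagonal (let $\bfh$ align with the near-kernel direction with length comparable to the inverse square root of the small eigenvalue). The paper states this obstruction explicitly in the remark after \Thm{main3}: with generators from $\{M^{[f]}\mid f\in\cF\}$ ``we can no longer decompose it into the sum of scalar-valued stochastic integrals in general, at least in $\maruM$.'' So your argument, as written, proves \Thm{main3} rather than \Thm{martingaledim}. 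The proof in \cite{Hi10} circumvents this by choosing the generators differently: one orthogonalizes a countable $e$-dense family of martingale additive functionals inside the Hilbert space $(\maruM,e)$ via projections onto stable subspaces (in the spirit of Motoo--Watanabe and Davis--Varaiya), obtaining $M^{(1)},\dots,M^{(p)}$ with $\mu_{\la M^{(k)},M^{(l)}\ra}=0$ for $k\ne l$. With a diagonal Gram matrix, membership of $\bfh$ in the vector-valued space is equivalent to $h_k\in L^2(X;\mu_{\la M^{(k)}\ra})$ for every $k$, and each coefficient is automatically square-integrable by the Bessel-type bound $\tfrac12\int_X h_k^2\,d\mu_{\la M^{(k)}\ra}=e(h_k\bullet M^{(k)})\le e(M)$. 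Such orthogonalized generators lie in $\maruM$ but are generally not of the form $M^{[f]}$ --- which is precisely why \Thm{main3} is presented in this paper as a separate refinement that must resort to vector-valued integrals.
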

The theorem stated above is valid even if the index is $+\infty$.
Henceforth, we assume that the index $p$ is finite and greater than $0$, as in the previous section.
In order to state the main theorem in this section,
we make a slight generalization of the stochastic integrals to vector-valued functions/additive functionals.
As in the previous case, we fix an m.\,e.\,d.\,m.\ $\nu$ of $(\cE,\cF)$.
Let $n\in\N$ and $M_1,\dots,M_n\in\maruM$. We write $\bfM$ for $\tr(M_1,\dots,M_n)$ and $Z_\bfM$ for $\left({d\mu_{\la M_i,M_j\ra}}/{d\nu}\right)_{i,j=1}^n$. 
We remark that  $\mu_{\la M_i,M_j\ra}\ll \nu$ for any $i$ and $j$ from \cite[Lemma~3.2]{Hi10}.
Define
\begin{align*}
  L^2(X\to\R^n;\bfM)&=\left\{\bfh=\tr(h_1,\dots,h_n)\;\vrule\;\;\parbox{0.4\textwidth}{$h_i$ is a $\nu$-measurable real-valued funtion on $X$ for $i=1,\dots,n$, and $\displaystyle\int_X (\bfh,Z_\bfM\bfh)_{\R^n}\,d\nu<+\infty$}\,\right\},\\
  \hat L^2(X\to\R^n;\bfM)&=\left\{\bfh=\tr(h_1,\dots,h_n)\in L^2(X\to\R^n;\bfM)\;\vrule\;\;\parbox{0.25\textwidth}{$h_i\in L^2(X;\mu_{\la M_i\ra})$ for every $i=1,\dots,n$}\,\right\}.
\end{align*}
The definition of these spaces is independent of the choice of $\nu$.
For $\bfh,\bfh'\in L^2(X\to\R^n;\bfM)$, we set
\[
(\bfh,\bfh')_{\bfM}=\int_X (\bfh,Z_\bfM\bfh')_{\R^n}\,d\nu.
\]
 Any $\bfh=\tr(h_1,\dots,h_n)\in L^2(X\to\R^n;\bfM)$ can be approximated by elements $\{\bfh^{(k)}\}_{k=1}^\infty$ in $\hat L^2(X\to\R^n;\bfM)$ in the sense that 
$(\bfh-\bfh^{(k)},\bfh-\bfh^{(k)})_{\bfM}\to0$ as $k\to\infty$.
Indeed, it suffices to take 
\begin{equation}\label{eq:approx}
\bfh^{(k)}=\tr(h_1\cdot1_{A_k},\dots,h_n\cdot1_{A_k}),
\end{equation}
where $A_k=\{x\in X\mid (\bfh(x),Z_\bfM(x)\bfh(x))_{\R^n}\le k\}$.

For $\bfh=\tr(h_1,\dots,h_n)\in \hat L^2(X\to\R^n;\bfM)$, we define $\bfh\bullet\bfM\in\maruM$ as $\bfh\bullet\bfM=\sum_{i=1}^n h_i\bullet M_i$.
Since 
\begin{align*}
e(\bfh\bullet\bfM)
&=\sum_{i,j=1}^n e(h_i\bullet M_i,h_j\bullet M_j)\\
&=\frac12\sum_{i,j=1}^n\int_{X}h_i h_j \,d\mu_{\la M_i,M_j\ra}\\
&=\frac12(\bfh,\bfh)_{\bfM},
\end{align*}
we can define $\bfh\bullet\bfM\in\maruM$ for any $\bfh\in L^2(X\to\R^n;\bfM)$ by approximating $\bfh$ by elements in $\hat L^2(X\to\R^n;\bfM)$.
Then, the identity
$e(\bfh\bullet\bfM)=(\bfh,\bfh)_{\bfM}/2$
holds for all $\bfh\in L^2(X\to\R^n;\bfM)$.

For $f\in\cF$, let $M^{[f]}$ denote the element of $\maruM$ that appears in the Fukushima decomposition of the additive functional $\tilde f(X_t)-\tilde f(X_0)$ (cf.~\cite[Theorem~5.2.2]{FOT}). 
Its energy measure $\mu_{\la M^{[f]}\ra}$ coincides with $\mu_{\la f\ra}$ from \cite[Theorem~5.2.3]{FOT}.
In particular,
\begin{equation}\label{eq:fuku}
\mu_{\la M^{[f]},M^{[g]}\ra}=\mu_{\la f,g\ra}\quad
\text{for }f,g\in\cF.
\end{equation}

Fix $\bfg=(g_1,\dots,g_p)\in\cG$ and denote $\tr{\left(M^{[g_1]},\dots,M^{[g_p]}\right)}$ by $\bfM^{[\bfg]}$.
As in the previous section, we set $Z_\bfg^{i,j}={d\mu_{\la g_i,g_j\ra}}/{d\nu}$ for $i,j\in\{1,\dots,p\}$, and $Z_{\bfg,r}=\left(Z_\bfg^{i,j}\right)_{\!i,j=1}^r$ for $r=1,\dots,p$.
We write $Z_\bfg$ for $Z_{\bfg,p}$, which is identical to $Z_{\bfM^{[\bfg]}}$.
The following theorem is an improvement on a part of \Thm{martingaledim}.
\begin{theorem}\label{th:main3}
 Let $M\in\maruM$.
 Then, there exists $\bfh=\tr{(h_1,\dots,h_p)}\in L^2(X\to\R^p;\bfM^{[\bfg]})$ such that
 \begin{equation}\label{eq:cond1}
 \text{for $\nu$-a.e.\,$x$, $h_i(x)=0$ for all $i>p(x)$},  \end{equation}
and 
\begin{equation}\label{eq:cond2}
M=\bfh\bullet\bfM^{[\bfg]}.
\end{equation}
The function $\bfh$ is uniquely determined up to $\nu$-equivalence.
Moreover, if $M=M^{[f]}$ for some $f\in\cF$, then $\bfh$ is provided by $\nabla\!_\bfg f$.
\end{theorem}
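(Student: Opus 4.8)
The plan is to mimic the construction of $\nabla\!_\bfg f$ in \eqref{eq:gradient}, replacing the vector $\bfu_r$ by the corresponding object built from $M$. Since $\mu_{\la M\ra}\ll\nu$ (by \cite[Lemma~3.2]{Hi10}) and $\mu_{\la M^{[g_i]}\ra}=\mu_{\la g_i\ra}\ll\nu$ via \eqref{eq:fuku}, inequality \eqref{eq:schwarz} gives $\mu_{\la M,M^{[g_i]}\ra}\ll\nu$; I set $v_i=d\mu_{\la M,M^{[g_i]}\ra}/d\nu$ and $\bfv_r=\tr(v_1,\dots,v_r)$, and define $\bfh=\tr(h_1,\dots,h_p)$ by $\tr(h_1,\dots,h_r)=Z_{\bfg,r}^{-1}\bfv_r$ together with $h_{r+1}=\dots=h_p=0$ on $X(r)$, for $r=1,\dots,p$. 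Condition \eqref{eq:cond1} then holds by construction. Moreover, when $M=M^{[f]}$ the identity \eqref{eq:fuku} yields $v_i=d\mu_{\la f,g_i\ra}/d\nu$, so $\bfv_r=\bfu_r$ and $\bfh$ coincides with $\nabla\!_\bfg f$ of \eqref{eq:gradient}; hence the last assertion of the theorem will follow at once from the uniqueness clause, once the main claims are established.

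The technical core is the following analogue of \Lem{Kr} for $M\in\maruM$: for each $r$,
\[
\frac{d\mu_{\la M\ra}}{d\nu}=\tr\bfv_r Z_{\bfg,r}^{-1}\bfv_r\quad\nu\text{-a.e.\ on }X(r).
\]
I would obtain this by the same Schur-complement determinant computation as in the proof of \Lem{Kr}, applied to the $(r+1)\times(r+1)$ matrix $\left(d\mu_{\la L_a,L_b\ra}/d\nu\right)_{a,b=1}^{r+1}$ with $L_1=M^{[g_1]},\dots,L_r=M^{[g_r]}$ and $L_{r+1}=M$, whose blocks are $Z_{\bfg,r}$, $\bfv_r$, and $d\mu_{\la M\ra}/d\nu$ by \eqref{eq:fuku}. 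The computation delivers the identity as soon as this matrix is known to be singular $\nu$-a.e.\ on $X(r)$, i.e.\ to have rank at most $p(x)=r$ there. This rank bound is the step I expect to be the main obstacle: \Defn{index} asserts it only for matrices built from functions in $\cF$, and while the blocks $d\mu_{\la M^{[g_a]},M^{[g_b]}\ra}/d\nu$ reduce to that case via \eqref{eq:fuku}, the presence of a general $M\in\maruM$ is a genuinely stronger input. I would supply it from the pointwise index theory of \cite{Hi10} underlying \Thm{martingaledim}, by which $p(x)$ dominates the $\nu$-a.e.\ rank of every energy matrix of elements of $\maruM$, not merely of $\cF$. (From \Thm{martingaledim} alone one gets only the global bound: writing $L_a=\sum_k h_{a,k}\bullet M^{(k)}$ for a family $\{M^{(k)}\}_{k=1}^p$ as in that theorem makes the energy matrix factor as $HG\,\tr H$ with $G$ the $p\times p$ energy matrix of the family, so its rank is at most $p$; the refinement to $p(x)$ requires the pointwise statement.)

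Granting this, the remaining steps are routine. Summing the displayed identity over $r=1,\dots,p$ and recalling $\nu(X(0))=0$ gives $\int_X(\bfh,Z_\bfg\bfh)_{\R^p}\,d\nu=\int_X\tr\bfv_r Z_{\bfg,r}^{-1}\bfv_r\,d\nu=\mu_{\la M\ra}(X)=2e(M)<\infty$, so $\bfh\in L^2(X\to\R^p;\bfM^{[\bfg]})$. To prove \eqref{eq:cond2} I would work in the Hilbert space $(\maruM,e)$ and show $e(M-N)=0$ for $N:=\bfh\bullet\bfM^{[\bfg]}=\sum_{i=1}^p h_i\bullet M^{[g_i]}$. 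The characterization of the stochastic integral gives $e(h_i\bullet M^{[g_i]},M)=\tfrac12\int_X h_i\,d\mu_{\la M^{[g_i]},M\ra}=\tfrac12\int_X h_i v_i\,d\nu$, whence $e(M,N)=\tfrac12\sum_{i=1}^p\int_X h_i v_i\,d\nu$, which on each $X(r)$ equals $\tfrac12\int_{X(r)}\tr\bfv_r Z_{\bfg,r}^{-1}\bfv_r\,d\nu$; summing yields $e(M,N)=e(M)$. On the other hand $e(N)=\tfrac12(\bfh,\bfh)_{\bfM^{[\bfg]}}$ equals the same quantity by the $L^2$-computation just made. Hence $e(M-N)=e(M)-2e(M,N)+e(N)=0$, i.e.\ $M=N$.

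Finally, for uniqueness, suppose $\bfh$ and $\bfh'$ both satisfy \eqref{eq:cond1} and \eqref{eq:cond2}. Then $(\bfh-\bfh')\bullet\bfM^{[\bfg]}=0$, so $\int_X\bigl(\bfh-\bfh',Z_\bfg(\bfh-\bfh')\bigr)_{\R^p}\,d\nu=2e\bigl((\bfh-\bfh')\bullet\bfM^{[\bfg]}\bigr)=0$. On $X(r)$ the vector $\bfh-\bfh'$ has vanishing coordinates beyond the $r$-th by \eqref{eq:cond1}, and $Z_{\bfg,r}$ is positive definite there, so $\bfh-\bfh'=0$ $\nu$-a.e.; combined with the first paragraph this also identifies $\bfh$ with $\nabla\!_\bfg f$ when $M=M^{[f]}$, completing the proof.
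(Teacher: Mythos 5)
Your definition of $\bfh$ is exactly the paper's \Eq{h}, and your endgame (the energy computation $e(M-N)=e(M)-2e(M,N)+e(N)=0$, the uniqueness via invertibility of $Z_{\bfg,r}$ on $X(r)$, and the identification with $\nabla\!_\bfg f$ when $M=M^{[f]}$) coincides with the paper's. The genuine gap is at the step you yourself flag as the technical core: the extension of \Lem{Kr} to a general $M\in\maruM$, i.e.\ the $\nu$-a.e.\ singularity on $X(r)$ of the bordered matrix with blocks $Z_{\bfg,r}$, $\bfv_r$, $d\mu_{\la M\ra}/d\nu$. You propose to quote this from ``the pointwise index theory of \cite{Hi10}'', asserting that $p(x)$ bounds the a.e.\ rank of energy matrices of elements of $\maruM$. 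No such pointwise statement is available to quote: \Defn{index} constrains only matrices built from functions in $\cF$; \Thm{martingaledim} gives, as you concede, only the global bound $p$; and the only fact about energy measures of additive functionals that this paper imports from \cite{Hi10} is the absolute continuity $\mu_{\la M_i,M_j\ra}\ll\nu$ (Lemma~3.2 there). The paper's own proof is organized precisely so as never to need a pointwise rank bound for general $M$ --- strong evidence that it is not a citable off-the-shelf result --- so, as written, your key lemma is simply unproven.

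The gap can be filled, but the natural way to do so reconstructs the very machinery the paper uses in place of your lemma. For $M=\ph\bullet M^{[f]}\in\maruM_{\#}$ one has $\bfv_r=\ph\,\bfu_r$ and $d\mu_{\la M\ra}/d\nu=\ph^2\,d\mu_{\la f\ra}/d\nu$, so the bordered matrix factors as $B\,A\,\tr{B}$ with $A$ the energy-density matrix of the functions $(g_1,\dots,g_r,f)$ and $B=\mathrm{diag}(1,\dots,1,\ph)$, and \Defn{index} applies; bilinearity handles finite sums, i.e.\ $M\in\maruM_{\#\#}$; and for general $M$ one approximates in $e$-norm via \cite[Lemma~5.6.3]{FOT}, observes that the matrix entries converge in $L^1(\nu)$ (by the analogue of \Eq{schwarz} for additive functionals), hence $\nu$-a.e.\ along a subsequence, and invokes lower semicontinuity of rank. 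This is exactly the paper's route $\maruM_{\#}\subset\maruM_{\#\#}\subset\maruM$, except that the paper runs the limiting argument at the level of energies --- establishing the isometry \Eq{isom} on $\maruM_{\#\#}$ and extending it by density --- and thereby never needs any pointwise rank statement; your plan, completed honestly, is the paper's proof with an extra pointwise layer on top. A further small repair: you write $N=\bfh\bullet\bfM^{[\bfg]}=\sum_{i=1}^p h_i\bullet M^{[g_i]}$ and compute $e(M,N)$ termwise, but this decomposition presupposes $\bfh\in\hat L^2(X\to\R^p;\bfM^{[\bfg]})$, which may fail (the paper stresses that the vector integral cannot in general be split into scalar integrals); you must first compute with the cutoffs \Eq{approx} and pass to the limit, as in \Eq{en}.
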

In the proof of \Thm{martingaledim}, $M^{(i)}$'s in \Eq{si} are taken from $\maruM$.
\Thm{main3} shows that we can take $M^{(i)}$'s from a smaller set $\{M^{[f]}\mid f\in\cF\}$ at the expense of regarding the stochastic integral as that of vector-valued functions/additive functionals; we can no longer decompose it into the sum of scalar-valued stochastic integrals in general, at least in $\maruM$.
\begin{proof}[Proof of \Thm{main3}]
We write $v_i$ for $d\mu_{\la M,M^{[g_i]}\ra}/d\nu$ for $i=1,\dots,p$.
For $r\in\{1,\dots,p\}$, we set a $\nu$-measurable $\R^r$-valued function $\bfv_r$ on $X$ by 
$\bfv_r=\tr(v_1,\dots,v_r)$.
We define an $\R^p$-valued function $\bfh=\tr(h_1,\dots,h_p)$ on $X$ satisfying \Eq{cond1} by 
\begin{equation}\label{eq:h}
\left[\begin{array}{c}h_1\\\vdots\\h_r\end{array}\right]
=Z_{\bfg,r}^{-1}\bfv_r\text{ and }
\left[\begin{array}{c}h_{r+1}\\\vdots\\h_p\end{array}\right]
=\left[\begin{array}{c}0\\\vdots\\0\end{array}\right]\quad
\text{on $X(r)$, for $r=0,1,\dots,p$.}
\end{equation}
We note that $\bfh=\nabla\!_\bfg f$ if $M=M^{[f]}$ for some $f\in\cF$, in view of \Eq{ur}, \Eq{gradient}, and \Eq{fuku}.

In order to prove that $\bfh\in L^2(X\to\R^p;\bfM^{[\bfg]})$ and $M=\bfh\bullet\bfM^{[\bfg]}$, let us first consider the case that $M$ is described as $M=\ph\bullet M^{[f]}$ for some $f\in\cF$ and $\ph\in L^2(X;\mu_{\la f\ra})$.
Denote the set of all such $M$'s by $\maruM_{\#}$.
In this case, we have
\begin{align}\label{eq:eM}
\frac12\int_{X}\tr \bfh Z_{\bfg}\bfh\,d\nu
&=\frac12\sum_{r=1}^p\int_{X(r)}\tr\bfv_r Z_{\bfg,r}^{-1} \bfv_r\,d\nu\notag\\
&=\frac12\sum_{r=1}^p\int_{X(r)}\ph^2 \,\tr\bfu_r Z_{\bfg,r}^{-1} \bfu_r\,d\nu
\quad\text{(cf.\ $\bfu_r$ is defined in \Eq{ur})}\notag\\
&=\frac12\sum_{r=1}^p\int_{X(r)}\ph^2\frac{d\mu_{\la f\ra}}{d\nu}\,d\nu
\quad\text{(from \Lem{Kr})}\notag\\
&=\frac12\int_X \ph^2\,d\mu_{\la M^{[f]}\ra}\notag\\
&=e(M)<+\infty;
\end{align}
thus, $\bfh\in L^2(X\to\R^p;\bfM^{[\bfg]})$.
By approximating $\bfh$ by elements $\bfh^{(k)}=\tr(h^{(k)}_1,\dots,h^{(k)}_p)$ in $\hat L^2(X\to\R^p;\bfM^{[\bfg]})$ as in \Eq{approx}, we have
\begin{align}\label{eq:en}
&e(M-\bfh\bullet \bfM^{[\bfg]})\notag\\
&=\lim_{k\to\infty}e(M-\bfh^{(k)}\bullet \bfM^{[\bfg]})\notag\\
&=e(M)-\lim_{k\to\infty}2e(M,\bfh^{(k)}\bullet \bfM^{[\bfg]})+\lim_{k\to\infty}e(\bfh^{(k)}\bullet \bfM^{[\bfg]})\notag\\
&=e(M)-\lim_{k\to\infty}\sum_{i=1}^p \int_X h_i^{(k)}\,d\mu_{\langle M, M^{[g_i]}\rangle}
+\lim_{k\to\infty}\frac12\sum_{i,j=1}^p \int_X h_i^{(k)}h_j^{(k)}\,d\mu_{\langle M^{[g_i]}, M^{[g_j]}\rangle}\notag\\
&=e(M)-\lim_{k\to\infty} \int_X \sum_{i=1}^p 1_{A_k}h_i v_i\,d\nu+\lim_{k\to\infty}\frac12\int_X \sum_{i,j=1}^p 1_{A_k}h_i h_j Z_{\bfg}^{i,j}\,d\nu\notag\\
&=e(M)-\lim_{k\to\infty}\frac12\int_{A_k}\tr \bfh Z_{\bfg}\bfh\,d\nu\notag\\
&=e(M)-\frac12\int_{X}\tr \bfh Z_{\bfg}\bfh\,d\nu,
\end{align}
which vanishes from \Eq{eM}.
Therefore, \Eq{cond2} holds.

Let $\maruM_{\#\#}$ be the set of all $M\in\maruM$ described as a finite sum of additive functionals in $\maruM_{\#}$.
By linearity, \Eq{cond2} is true for $M\in\maruM_{\#\#}$.
From the observation that \Eq{en} is valid as long as $M\in \maruM$ and $\bfh\in L^2(X\to\R^p;\bfM^{[\bfg]})$ satisfy \Eq{h}, the identity
\begin{equation}\label{eq:isom}
e(M)=\frac12\int_{X}\tr \bfh Z_{\bfg}\bfh\,d\nu
=\frac12(\bfh,\bfh)_{\bfM^{[\bfg]}}
\end{equation}
holds for $M\in\maruM_{\#\#}$.
From the denseness of $\maruM_{\#\#}$ in $\maruM$ (cf.~\cite[Lemma~5.6.3]{FOT}) and the isometry~\Eq{isom}, identity~\Eq{isom} extends to all $M\in\maruM$.
By combining \Eq{en} with \Eq{isom}, Eq.~\Eq{cond2} holds for all $M\in\maruM$.

If another $\bfh'\in L^2(X\to\R^p;\bfM^{[\bfg]})$ satisfies \Eq{cond1} and \Eq{cond2} with $\bfh$ replaced by $\bfh'$, then $(\bfh-\bfh')\bullet\bfM^{[\bfg]}$ vanishes. 
From \Eq{isom}, $(\bfh-\bfh',\bfh-\bfh')_{\bfM^{[\bfg]}}=0$.
Since $Z_{\bfg,r}$ is invertible on $X(r)$ for each $r=1,\dots,p$, we obtain that $\bfh=\bfh'$ $\nu$-a.e.\ by taking \Eq{cond1} into consideration.
\end{proof}
\begin{acknowledgement}
The author thanks Professor Nobuyuki Ikeda for insightful discussions.
\end{acknowledgement}

\vspace{\baselineskip}
%


\begin{thebibliography}{[10]}
\bibitem{DV74} {M. H. A. Davis and P. Varaiya}, The multiplicity of an increasing family of $\sigma $-fields, {Ann. Probab.} {\bf2} (1974), 958--963.
\bibitem{Eb} A. Eberle, {\it Uniqueness and non-uniqueness of semigroups generated by singular diffusion operators}, Lecture Notes in Math. {\bf1718}, Springer-Verlag, Berlin, 1999.
\bibitem{FOT} {M. Fukushima, Y. Oshima, and M. Takeda}, {\it Dirichlet forms and symmetric Markov processes}, 2nd ed., de Gruyter Studies in Mathematics {\bf19}, Walter de Gruyter, Berlin, 2010.
\bibitem{Gr03} A. Grigor'yan, Heat kernels and function theory on metric measure spaces, in: {\it Heat kernels and analysis on manifolds, graphs, and metric spaces (Paris, 2002)}, Contemp. Math. {\bf338}, Amer. Math. Soc., Providence, RI, 143--172, 2003.
\bibitem{Hi08} {M. Hino}, Martingale dimensions for fractals, {Ann. Probab.} {\bf36} (2008), 971--991.
\bibitem{Hi10} {M. Hino}, Energy measures and indices of Dirichlet forms, with applications to derivatives on some fractals, Proc. Lond. Math. Soc. (3) {\bf100} (2010), 269--302.
\bibitem{Hi11p} M. Hino, Upper estimate of martingale dimension for self-similar fractals, to appear in Probab. Theory Related Fields.
\bibitem{IW73} N. Ikeda and S. Watanabe, The local structure of a class of diffusions and related problems, in: {\it Proceedings of the Second Japan--USSR Symposium on Probability Theory (Kyoto, 1972)}, 124--169. Lecture Notes in Math. \textbf{330}, Springer, Berlin, 1973.
\bibitem{Jo96} A. Jonsson, Brownian motion on fractals and function spaces, Math. Z. {\bf 222} (1996), 496--504.
\bibitem{Ki93} J. Kigami, Harmonic metric and Dirichlet form on the Sierpi\'nski gasket, in: {\it Asymptotic problems in probability theory: stochastic models and diffusions on fractals (Sanda/Kyoto, 1990)}, 201--218, Pitman Res. Notes Math. Ser. \textbf{283}, Longman Sci. Tech., Harlow, 1993.
\bibitem{Ki08} {J. Kigami}, Measurable Riemannian geometry on the Sierpinski gasket: the Kusuoka measure and the Gaussian heat kernel estimate, Math. Ann. {\bf340} (2008), 781--804. 
\bibitem{Kum00} T. Kumagai, Brownian motion penetrating fractals: an application of the trace theorem of Besov spaces, {J. Func. Anal.}, {\bf 170} (2000), 69--92.
\bibitem{KW67} {H. Kunita and S. Watanabe}, On square integrable martingales, {Nagoya Math. J.} {\bf30} (1967), 209--245.
\bibitem{Ku89} {S. Kusuoka}, Dirichlet forms on fractals and products of random matrices, {Publ. Res. Inst. Math. Sci.} {\bf25} (1989) 659--680.
\bibitem{Ku93} {S. Kusuoka}, Lecture on diffusion processes on nested fractals, in: {\it Statistical mechanics and fractals}, Lecture Notes in Math. {\bf1567}, Springer-Verlag, Berlin, 39--98, 1993.
\bibitem{Kuw98} K. Kuwae, Functional calculus for Dirichlet forms, 
Osaka J. Math. {\bf35} (1998), 683--715. 
\bibitem{MW64} {M. Motoo and S. Watanabe}, On a class of additive functionals of Markov processes, {J. Math. Kyoto Univ.} {\bf4} (1964/1965), 429--469.
\bibitem{PT08} A. Pelander and A. Teplyaev, Products of random matrices and derivatives on p.\,c.\,f. fractals, {J. Funct. Anal.} {\bf254} (2008), 1188--1216.
\bibitem{Sk66} A. V. Skorokhod, On the local structure of continuous Markov processes, Theory Probab. Appl. \textbf{11} (1966), 336--372.
\bibitem{Te00} {A. Teplyaev}, Gradients on fractals, {J. Funct. Anal.} {\bf174} (2000), 128--154.
\end{thebibliography}
\end{document}